\numberwithin{equation}{section}
\newtheorem{thm}{Theorem}[section]
\newtheorem{cor}[thm]{Corollary}
\newtheorem{prop}[thm]{Proposition}
\newtheorem{lem}[thm]{Lemma}
\theoremstyle{remark}
\newtheorem{rem}[thm]{Remark}
\newtheorem{example}[thm]{Example}
\theoremstyle{definition}
\newtheorem{defn}[thm]{Definition}
\theoremstyle{remark}
\newtheorem{remark}[thm]{Remark}
\newcommand{\R}{\mathbb{R}}
\newcommand{\Z}{\mathbb{Z}}
\newcommand{\C}{\mathbb{C}}
\newcommand{\F}{\mathbb{F}}
\newcommand{\A}{\mathcal{A}}
\begin{document}

\title[Instability of Legendrian knottedness, and non-regular concordances]{Instability of Legendrian knottedness, and non-regular Lagrangian concordances of knots}

\author{Georgios Dimitroglou Rizell}
\author{Roman Golovko}

\begin{abstract}
We show that the family of smoothly non-isotopic Legendrian pretzel knots from \cite{CornwellNgSivek16} that all have the same Legendrian invariants as the standard unknot have front-spuns that are Legendrian isotopic to the front-spun of the unknot. Besides that, we construct the first examples of Lagrangian concordances between Legendrian knots that are not regular, and hence not decomposable. Finally, we show that the relation of Lagrangian concordance between Legendrian knots is  not anti-symmetric, and hence does not define a partial order. The latter two results are based upon a new type of flexibility for Lagrangian concordances with stabilised Legendrian ends.
\end{abstract}

\date{\today}
\subjclass[2010]{Primary 53D12; Secondary 53D42}

\keywords{front spinning, non-regular Lagrangian concordance, Legendrian knottedness}

\maketitle

\section{Introduction and results}
\subsection{Instability of Legendrian knottedness}
\color{black} In this article we investigate a set of Legendrian knots in  $(\R^3,dz-y\,dx)$ that are not smoothly isotopic, admit exact Lagrangian fillings, and have stable-tame isomorphic Chekanov--Eliashberg algebras. (The latter DGAs are not acyclic in view of the fillability.) \color{black} Recall that the standard Legendrian $\tt{tb}=-1$ unknot $U$ in the standard contact $3$-dimensional vector space has zero rotation number and a Chekanov--Eliashberg algebra with $\Z[t^{\pm 1}]$-coefficients generated by a single generator $a$ in degree $|a|=1$, and with differential given by $\partial a=1+t$. \color{black} The first examples of Legendrian knots inside the contact vector-space that are not Legendrian isotopic to $U$, but which have stable-tame isomorphic Chekanov--Eliashberg algebras, were first constructed in \cite{CornwellNgSivek16} by Cornwell--Ng--Sivek. These examples where further explored by Etnyre--Ng in \cite[Section 3.4]{EtnyreNg18}, where it was shown that the Legendrian representatives $\Lambda_m$ of the pretzel knots $P(3,-3,-m)$, $m>3$ described in Figure~\ref{fig:P(3,-3,m)} all satisfy this property. Similarly to $U$, the latter Legendrian knots are also Lagrangian slice. However, they are not doubly-slice as shown by Chantraine--Legout \cite{ChantraineLegout22} \color{black}and, \color{black}  as a consequence, not Lagrangian cobordant to $U$.

\begin{figure}[htb]
\begin{center}
\labellist
\pinlabel $m-3$ at 480 555
\endlabellist
\includegraphics[width=250px]{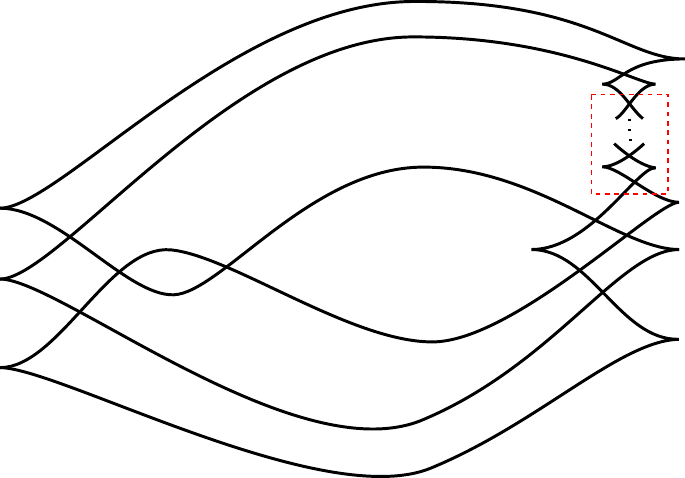}
\caption{The front projection of the Legendrian representative $\Lambda_m$ of the pretzel knot $P(3,-3,-m)$ has $m-3$ crossings in the red box.}
\label{fig:P(3,-3,m)}
\end{center}
\end{figure}

Recall that the front spinning construction (sometimes called  the Legendrian suspension)  is a construction that from a given  Legendrian submanifold $\Lambda$ in the standard contact vector space $\R^{2n+1}$ produces  the Legendrian embedding $\Sigma_{S^k} \Lambda$ of  $\Lambda \times S^k$ in $\R^{2(k+n)+1}$.
This construction for $k=1$ was introduced by Ekholm, Etnyre and Sullivan in \cite{EkholmEtnyreSullivan05}, and then extended to the case of an arbitrary $k$ by the second author in \cite{Golovko14}. This construction admits several \color{black}generalisations in different directions\color{black}, see \cite{BourgeoisSabloffTraynor15, DimitroglouRizellGolovko21, LambertCole14, RutherfordSulivan20}. One can also produce a higher-dimensional Legendrian from a lower-dimensional one by taking the Legendrian product of $\Lambda \subset (Y,\alpha)$ with the exact Lagrangian zero-section $0_M \subset T^*M$. This gives the Legendrian embedding $\Lambda \times 0_M \subset (Y \times T^*M,\alpha+p\,dq)$. When $M=S^n$ and $Y=\R^3$ is the contact vector space, the contact manifold $\R^3 \times T^*S^n$ admits a canonical embedding into $\R^{3+2n}$. The front-spun $\Sigma_{S^n}\Lambda$ can be identified with the image of the Legendrian product under this embedding. Note that both the above Legendrian product, as well as the front-spun, produces a well-defined Legendrian isotopy class, which only depends of the Legendrian isotopy class of the initial Legendrian $\Lambda$.

Here we investigate the Legendrian product operation and front spinning construction when applied to the above family of knots $\Lambda_m$. Since these knots only can be distinguished by smooth topological invariants, while \color{black}the Legendrian invariants \color{black}  agree, it is natural to \color{black} suspect \color{black} that their products and spuns all become Legendrian isotopic; see the following two remarks.

\begin{remark}
When $n \ge 2$, connected $n$-dimensional submanifolds in $(2n+1)$-dimensions that are homotopic are also smoothly isotopic by the classical result of Haefliger \cite{Haefliger}. Hence, the Legendrian products and front spuns of $\Lambda_n$ and $U$ are smoothly isotopic. 
\end{remark}

\begin{remark}
\label{invariantspretzel_unknot}The Legendrian knots $U$ and $\Lambda_m$ all have the same classical invariants except smooth knot class, i.e.~the same rotation number and Thurston--Bennequin invariant. Hence, their spuns have the same classical invariants by \cite[Lemma 4.16] {EkholmEtnyreSullivan05}
and \cite[Lemma 5.1]{Golovko14}.  
Also their Legendrian contact homology DGAs, $\A(\Lambda_m)$ and $\A(U)$, are stable tame isomorphic for all $m \ge 3$ with  arbitrary coefficients. In particular, using the surgery formula from \cite{EkholmLekili}, it follows that the partially wrapped Fukaya categories of the two sectors $\C^2_U$ and $\C^2_{\Lambda_m}$ are equivalent, where these sectors are obtained by putting stops at the Legendrians $U,\Lambda_m \subset \partial_\infty \C^2$. Since the partially wrapped Fukaya category of a product of sectors satisfies a K\"{u}nneth-type formula \cite{GPS}, the partially wrapped Fukaya category of the sector $\C^2_{\Lambda_m} \times T^*S^n$ with stop $\Lambda_m \times 0_{S^n} \subset \partial_\infty (\C^2 \times T^*S^n)$ is thus also equivalent to that of $\C^2_U \times T^*S^n$ with stop $U \times 0_{S^n} \subset \partial_\infty (\C^2 \times T^*S^n)$ whenever $m\ge 3$. Analogous results for the DGA itself should also follow from a version of K\"{u}nneths formula for Legendrian contact homology from work in progress by Strako\v{s} \cite{Strakos}. 
\end{remark}

Our first result is that these Legendrian product and spuns indeed are Legendrian isotopic.
\begin{thm}
\label{spun_iso}
$\Sigma_{S^n} U$ is  Legendrian isotopic to $\Sigma_{S^n} \Lambda_m$ in $(\R^{2(n+1)+1},dz-\mathbf{y}\, d\mathbf{x})$ for $n\geq 1$ and $m>3$. In fact, something stronger is true:  the Legendrian products $U \times 0_{S^n}$ and $\Lambda_m \times 0_{S^n}$ are Legendrian isotopic inside $(\R^3 \times T^*S^n,dz-y\,dx+p\,dq)$ when $n \geq 1$ and $m > 3$. 
\end{thm}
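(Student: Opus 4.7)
The plan is to prove the stronger statement first: construct an explicit Legendrian isotopy between $\Lambda_m \times 0_{S^n}$ and $U \times 0_{S^n}$ inside $(\R^3 \times T^*S^n, dz-y\,dx+p\,dq)$. The Legendrian isotopy of the front-spuns then follows immediately from the canonical contact embedding $\R^3 \times T^*S^n \hookrightarrow \R^{2(n+1)+1}$ discussed above, under which the Legendrian product is identified with the front-spin. My overall strategy is to decompose the passage from $\Lambda_m$ to $U$ into a sequence of local moves on the front projection -- crossing changes and handle-slide moves visible in Figure \ref{fig:P(3,-3,m)} -- and to argue that each such move, while not a Legendrian isotopy within $\R^3$, becomes realisable as one after taking the product with the zero section, thanks to the additional topological room provided by the sphere factor.

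First I would identify an explicit sequence of crossing changes that converts the front of $\Lambda_m$ into that of $U$ as smooth knots. Such a sequence exists because the two Legendrians share their classical invariants (Thurston--Bennequin number and rotation number), so they are formally Legendrian isotopic, and the only residual obstruction in three dimensions is the over/under information at the crossings produced by the $m-3$ twists in the red box of Figure \ref{fig:P(3,-3,m)}. The main technical content is then to realise each such crossing change as a genuine Legendrian isotopy of the product. In a Darboux chart around a crossing, two sheets of the Legendrian product lie on the zero section $0_{S^n}$, separated only by their $y$-coordinates in $\R^3$. The plan is to apply a compactly supported contact Hamiltonian that lifts one sheet into the cotangent directions of $T^*S^n$, transports it along an embedded arc on $S^n$ which travels around one of the sheets in the sphere direction, and redeposits it onto the zero section on the far side. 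For $n \geq 1$ there is enough room on $S^n$ to carry out such a transport without self-intersection, and by construction the deformation is Legendrian throughout.

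The main obstacle I anticipate is global consistency: the sequence of local maneuvers must concatenate into a single embedded Legendrian isotopy, successive moves must not obstruct each other, and the net effect on the sphere factor must be trivial, so that the final Legendrian is genuinely equal to $U \times 0_{S^n}$ and not merely smoothly isotopic to it. Here the matching of all Legendrian contact homology data recorded in Remark \ref{invariantspretzel_unknot}, together with the explicit combinatorial description of the pretzel Legendrians from \cite{CornwellNgSivek16}, should guarantee that the bookkeeping closes up and that no residual obstruction survives the sphere-direction maneuvers.
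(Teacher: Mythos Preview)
Your proposal has a genuine gap at its core mechanism. The claim that a local ``crossing change'' on the front of $\Lambda_m$ can be realised as a Legendrian isotopy of $\Lambda_m \times 0_{S^n}$ by ``lifting one sheet into the cotangent directions of $T^*S^n$ and transporting it around the sphere'' is not justified, and there is no reason to expect it to hold. Both sheets of the product at a crossing have the form $\gamma_i \times 0_{S^n}$ and thus occupy the \emph{entire} zero-section in the $T^*S^n$-factor; there is no ``room in the sphere direction'' to slide one past the other while remaining embedded. Pushing a sheet off the zero-section by an exact one-form $-df$ forces a $q$-dependent shift of the $z$-coordinate by $f(q)$, and you would then need to argue that a full Legendrian isotopy realising the crossing change exists---this is exactly the hard part, and nothing in your outline addresses it. Crucially, the Legendrians $\Lambda_m \times 0_{S^n}$ and $U \times 0_{S^n}$ are \emph{not} loose (both bound exact Lagrangian fillings), so no $h$-principle applies to them directly, and the matching of DGAs from Remark~\ref{invariantspretzel_unknot} is an obstruction-theoretic statement that cannot by itself produce an isotopy.

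The paper's proof works with an entirely different structure that you have missed: both $\Lambda_m$ and $U$ arise as ambient Legendrian surgeries on the \emph{same} link $\Lambda_0$ (two unlinked $\mathtt{tb}=-1$ unknots) along isotropic surgery arcs $\eta_m$ and $\eta_0$ respectively. The key point is that these \emph{arcs} are stabilised in the complement $\R^3 \setminus \Lambda_0$, so their products $\eta_i \times 0_{S^n}$ are loose Legendrians in $(\R^3 \times T^*S^n) \setminus (\Lambda_0 \times 0_{S^n})$. One then shows (Proposition~\ref{charact_rel}) that the spun arcs are formally Legendrian isotopic in this complement---for odd $n$ by pushing off along a nonvanishing section of $T^*S^n$, for even $n$ by a Whitney-disc argument after checking that the relevant algebraic intersection numbers vanish---and Murphy's $h$-principle upgrades this to a genuine Legendrian isotopy of the arcs. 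Since ambient Legendrian surgery depends only on the contact-isotopy class of the surgery arc (Lemma~\ref{lem:isosurgery}), this yields the desired Legendrian isotopy of the surgered products. The looseness of the spun \emph{arcs}, not of the spun knots, is what makes the argument work.
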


\color{black} In Section \ref{proof_Theorem_spun_iso} \color{black} we give two related, but different constructions of the Legendrian isotopy needed for proving Theorem \ref{spun_iso}. First, note that $\Lambda_m$, $m>3$, can be obtained by an ambient Legendrian surgery $(\Lambda_0)_{\eta_m}$ performed on $\Lambda_0$ consisting of two unlinked Legendrian unknots of $\tt{tb}=-1$ along a Legendrian surgery arc $\eta_m$ with boundary $\partial \eta_m \subset \Lambda_0$; see Figures \ref{fig:handle_decomposition_even} and \ref{fig:handle_decomposition_odd}. The corresponding exact Lagrangian cobordism is described in Figure \ref{fig:filling of pretzel}.

\begin{figure}[t]
\begin{center}
\vspace{3mm}
\labellist
\pinlabel $t$ at 8 700
\pinlabel $\Lambda_m$ at 80 650
\pinlabel ${\color{red} \eta}$ at 230 390
\endlabellist
\includegraphics[width=150px]{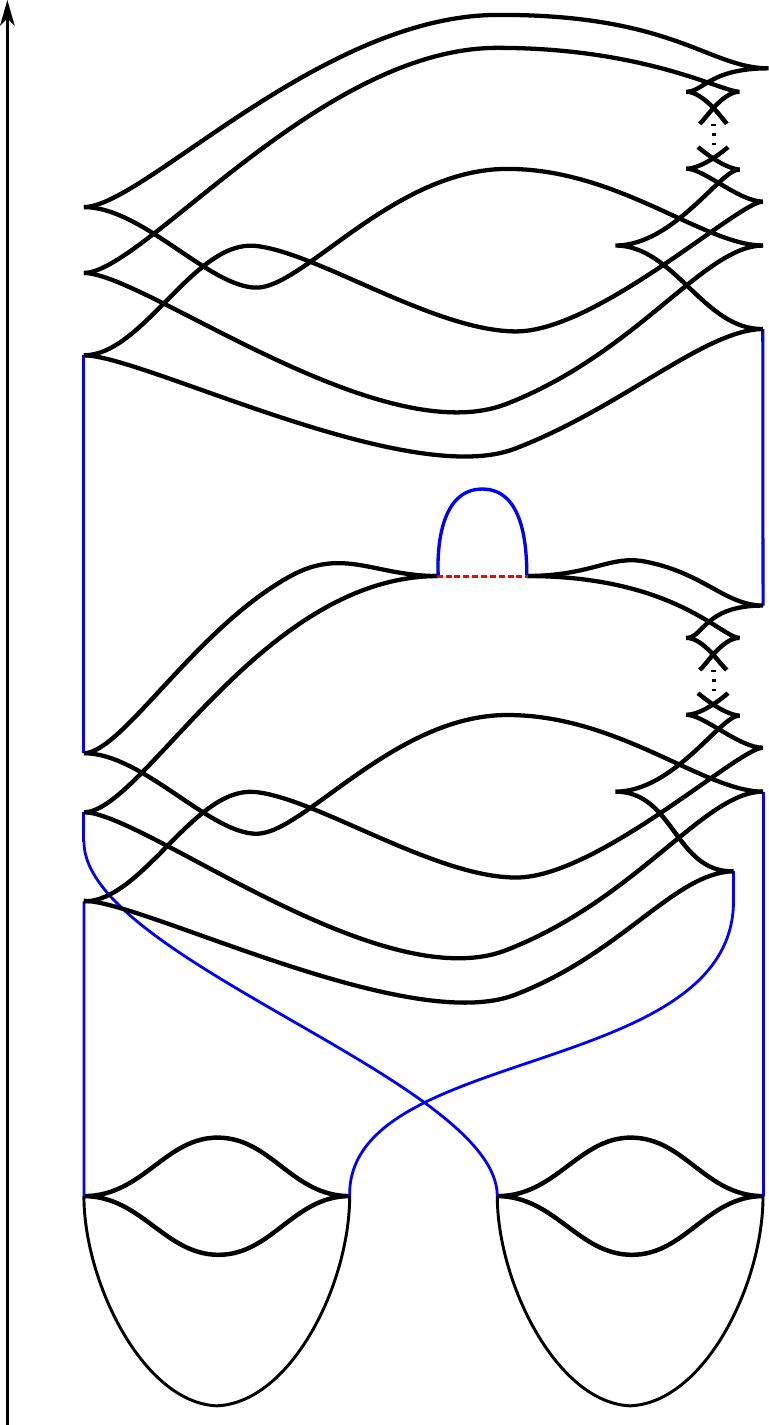}
\caption{A Lagrangian slice disc filling of $\Lambda_m$ that is built from a Lagrangian standard handle-attachment cobordism from two unlinked $\tt{tb}=-1$ unknots to $\Lambda_m$, $m>3$. Here $\eta$ is the Legendrian arc along which the corresponding ambient Legendrian surgery is performed.}
\label{fig:filling of pretzel}
\end{center}
\end{figure}

More precisely, the two crucial properties that we need from this presentation of $\Lambda_m$ are as follows:
\begin{enumerate}
\item The Legendrian arc $\eta_m$  admits a stabilisation in the complement of $\Lambda_0$; and
\item The product of the surgery arc $\eta_m \times 0_{S^n} \subset \R^3 \times T^*S^n$ is formally Legendrian isotopic to the spun $\eta^0_m \times 0_{S^n}$ of a stabilised Legendrian arc $\eta^0_m$ as shown in Figure \ref{fig:unknotodd}, for which the Legendrian ambient surgery $(\Lambda_0)_{\eta^0_m}$ produces the Legendrian standard unknot $U$. Moreover, we need this formal Legendrian isotopy to be supported in the complement of the product $\Lambda_0 \times 0_{S^n}$.
\end{enumerate}

The first strategy uses the fact that the spuns $\eta_m \times 0_{S^n}$ and $\eta^0_m \times 0_{S^n}$ are Legendrian isotopic \color{black} by an isotopy supported in the complement of $\Lambda_0 \times 0_{S^n}$\color{black}; this follows from Murphy's $h$-principle for loose Legendrians \cite{Murphy}, which can be applied since spuns of stabilised Legendrians are loose, see \cite{DimitroglouRizellGolovko14}. 
It then follows by construction of Legendrian ambient surgery and the standard Legendrian neighborhood theorem that $\Lambda_m \times 0_{S^n}$ and $U \times 0_{S^n}$ are Legendrian isotopic.

The second strategy utilises the Weinstein handle-body structure on the complement of the Lagrangian handle-attachment cobordism from the unlinked unknots $\Lambda_0$ to $\Lambda_m$. This is the so-called complementary sectorial cobordism, which is Weinstein since the Lagrangian handle-attachment cobordism is regular in the sense of Eliashberg--Ganatra--Lazarev \cite{EliashbergGanatraLzarev20}; see Section \ref{sec:regular} for the definitions of regularity and complementary cobordism, and Proposition \ref{prop:surgeries} for the regularity. Since the handles of the complementary cobordism are attached along stabilised Legendrian knots, the complementary cobordism becomes flexible after spinning. From this we can produce a symplectomorphism of the symplectisation of $\R^3 \times T^*S^n$ that identifies the standard Lagrangian fillings of $U \times 0_{S^n}$ and $\Lambda_m \times 0_{S^n}$. The Legendrian isotopy is then constructed by applying Lemma \ref{lem:isotopy} to the contactomorphism of the contact boundary that is induced by the symplectomorphism.
\color{black}

\subsection{Non-regular Lagrangian concordances in the symplectisation of $\R^3$}

The existence of a flexible regular \color{black}filling \color{black}is a crucial ingredient for constructing the contact isotopy in Theorem \ref{spun_iso}. While there exist plenty of regular fillings whose Weinstein complements are not flexible, we unfortunately do not have \color{black} a good understanding of when regularity holds for \color{black} Lagrangians inside Weinstein cobordisms. It is expected that all exact Lagrangian fillings in Weinstein domains are regular, as formulated by Eliashberg--Ganatra--Lazarev in \cite[Problem 2.5]{EliashbergGanatraLzarev20}. Furthermore, it was proposed there that any Lagrangian cobordism with a negative end which is not loose or stabilised Legendrian is regular. (In \cite{DimitroglouRizellGolovko19} the authors showed that this condition must be weakened to, at least, the condition that the negative end is not sub-loose, which means that it is not cobordant to a loose Legendrian.)

\color{black} Non-regular Lagrangian cobordisms were first constructed by Eliashberg--Murphy in high dimensions \cite{LagCaps} and Lin \cite{Lin} in dimension four. In these cases, the negative ends are stabilised (which means loose in higher dimensions). These Lagrangian cobordisms are non-regular for the basic topological reason that they have empty positive ends; a regular oriented Lagrangian $L$ inside a Weinstein cobordism $W$ must define a non-zero homology class $[L]\in H_n(\overline{W},\partial_- \overline{W})$ relative the concave boundary of $\overline{W}$.

Here we produce the first examples of Lagrangian concordances of Legendrian knots inside the symplectisation of three-dimensional contact vector space that are not regular; see Section \ref{sec:regular}. Note that the negative Legendrian ends of the constructed Lagrangian concordances are stabilised and, hence, they do not admit Lagrangian fillings.
\begin{thm}[see Theorem~\ref{nonregular_concordance}]
\label{non-reg}
Let $\Lambda$ be a Legendrian knot that admits a decomposable Lagrangian disc filling. Then there exists an exact Lagrangian concordance $C \subset (\R_t \times \R^3,d(e^t\alpha_0))$ with concave Legendrian boundary $\Lambda_-$ obtained by $k$-fold positive and negative stabilisation of $\Lambda$ for $k \gg 0$ sufficiently large, and a convex Legendrian boundary $\Lambda_+$ which is the standard Legendrian unknot $U$ stabilised the same number of times. \end{thm}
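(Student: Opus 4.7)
My plan is to invoke a new flexibility result for Lagrangian concordances with stabilised Legendrian ends (the main new tool advertised in the abstract), applied to formal concordance data built from the decomposable disc filling of $\Lambda$.

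\textbf{Step 1 (Formal concordance).} Since $\Lambda$ admits a Lagrangian disc filling, $\Lambda$ is smoothly slice, and hence smoothly concordant to $U$; I fix such a smooth concordance. A Lagrangian concordance preserves both the Thurston--Bennequin invariant and the rotation number, so after $k$-fold positive and negative stabilisation of both ends one checks directly that the classical invariants of $\Lambda_- = S_+^k S_-^k \Lambda$ and $\Lambda_+ = S_+^k S_-^k U$ agree. For $k \gg 0$ the smooth concordance can then be lifted to a formal Lagrangian concordance relative to the stabilised ends, the necessary formal obstructions being absorbed by the stabilisations.

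\textbf{Step 2 (Realisation via flexibility).} Apply the flexibility result for Lagrangian concordances with stabilised Legendrian ends to the formal concordance of Step 1, producing a genuine exact Lagrangian concordance $C \subset (\R_t \times \R^3, d(e^t \alpha_0))$ with concave boundary $\Lambda_-$ and convex boundary $\Lambda_+$. The decomposable disc filling $L$ of $\Lambda$ enters crucially here as the geometric model on which the flexibility argument is carried out: a local Darboux-interior surgery on $L$, together with stabilising modifications at the Legendrian boundary, provides an explicit candidate concordance, and the flexibility input rectifies it to an honest exact Lagrangian.

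\textbf{Step 3 (Non-regularity).} Suppose for contradiction that $C$ is regular in the sense of Eliashberg--Ganatra--Lazarev. Then the complement admits a compatible Weinstein sectorial structure, and $C$ is realised up to Lagrangian isotopy as a sequence of elementary Lagrangian moves. Combining this decomposition with the disc filling $L$ of the non-stabilised $\Lambda$ would allow a handle-cancellation argument to extract an exact Lagrangian filling of one of the stabilised ends $\Lambda_-$ or $\Lambda_+$, contradicting the fact that stabilised Legendrian knots admit no exact Lagrangian fillings (by the $\tt{tb} = 2g-1$ obstruction).

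The principal obstacle is Step 2, namely the flexibility statement itself, as no general Lagrangian $h$-principle is available in dimension four; this is the conceptual novelty of the paper, and it is where the assumption that $\Lambda$ be Lagrangian slice does the real work. Steps 1 and 3 are comparatively routine, although Step 3 requires careful bookkeeping of handles to make the cancellation rigorous given only the decomposable structure of $L$.
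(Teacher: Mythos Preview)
Your outline has the right overall shape but misidentifies the key intermediate object and the precise role of the decomposability hypothesis.

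The paper does not pass through a ``formal Lagrangian concordance'' (a notion with no useful $h$-principle in dimension four). Instead, the decomposable disc filling is first rewritten as a standard disc filling of $U$ concatenated with a decomposable Lagrangian concordance from $U$ to $\Lambda$; one then \emph{reverses} this concordance to obtain a smooth concordance $C'$ from $\Lambda$ to $U$. Proposition~\ref{prop:reversible} shows that $C'$ is smoothly isotopic rel boundary to a \emph{totally real} concordance: one computes the algebraic intersection of the Gauss map with the locus of complex planes in $Gr^+_2(\R^4)$, and decomposability is used exactly here --- the reversed standard $0$- and $1$-handles contribute with opposite signs, and since a concordance has equal numbers of each, the obstruction vanishes, so the $h$-principle for totally real embeddings applies. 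Only then does the flexibility input (Theorem~\ref{thm:flexibility}, from \cite{Dimitroglou:Approximations}) enter: it says a totally real concordance can be $C^0$-approximated by an exact Lagrangian one after adding sufficiently many stabilisations of both signs to each end. So the decomposable hypothesis feeds into producing the totally real concordance, not into the flexibility step as you suggest.

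Your Step~3 is not part of the stated theorem, but since you include it: the argument you sketch does not work. Regularity does not directly yield a filling of a stabilised end via handle cancellation. The paper's obstruction to regularity is Theorem~\ref{thm:regularconc}, a generalisation of \cite[Theorem~3.2]{CornwellNgSivek16}: if such a concordance were regular, the complementary handle structure would place one in the setting where the branched double cover argument and Kronheimer--Mrowka's $SO(3)$-representation result force the negative end to be smoothly unknotted, contradicting the choice of $\Lambda$ as a non-trivial knot.
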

The proof of Theorem \ref{non-reg} relies on recent work by the first author \cite{Dimitroglou:Approximations}, where it is  shown that a totally real concordance between Legendrian knots can be approximated by an exact Lagrangian concordance in the same isotopy class, after the knots have been stabilised sufficiently many times  with both positive and negative stabilisations. For investigating the totally real concordances, in Appendix \ref{appendix_totally_real} we prove the analogue of Chantraine's result from \cite{Chantraine12} on the behaviour of classical invariants for Lagrangian concordances in the case of totally real concordances; see Theorem \ref{totallyreal}. 

Showing that the constructed Lagrangian concordances are non-regular is more subtle compared to the situation of cobordisms with empty positive ends, and the obstruction that we use is based upon non-trivial input by Kronheimer--Mrowka in gauge theory; in fact, we need the reformulation \cite[Theorem 3.1]{CornwellNgSivek16} due to Cornwell--Ng--Sivek.
The non-regularity of the concordance follows from our generalisation \color{black}Theorem \ref{thm:regularconc} \color{black}  of \cite[Theorem 3.2]{CornwellNgSivek16}, where it is shown that a smooth concordance from a non-trivial knot to the unknot must have a local maximum, and can in particular not be decomposable in the sense of Chantraine \cite{Chantraine12}. In fact, it also follows that the concordance in Theorem \ref{non-reg} is not ribbon. 

Finally, we show that the non-regular concordances produced by Theorem \ref{non-reg} become Hamiltonian isotopic to the standard cylinder over a loose Legendrian torus after spinning; see Remark \ref{spinconcordance}.

\subsection{Lagrangian concordance of Legendrian knots is not a partial order}

We provide negative answer to the reasonably old question of whether the \color{black} Lagrangian concordance relation between Legendrian isotopy classes of Legendrian knots \color{black} is a partial order \cite{Chantraine10, UpppwerboundLagrcob}. 
\color{black}
By definition, this relation $\Lambda_- \preceq_{conc} \Lambda_+$ holds whenever there exists a Lagrangian concordance with negative and positive end $\Lambda_-$ and $\Lambda_+$, respectively. The relation is clearly reflexive and transitive, and it has been asked whether the relation defines a partial order. The concordances produced in Theorem \ref{non-reg} show that the Lagrangian concordance relation is not anti-symmetric, and hence is not a partial order. More precisely, since any Lagrangian slice disc of a Legendrian $\Lambda_-$ induces a Lagrangian concordance from the standard Legendrian unknot $U$ to $\Lambda_-$, i.e.~$ U \preceq_{conc} \Lambda_-,$ and since the Lagrangian concordance relation can be seen to be preserved under stabilisation of the two knots \cite{Chantraine10}, we get:
\begin{cor}
\label{cor:nonas}
Consider a Legendrian $\Lambda_-$ which is obtained by sufficiently many stabilisations (of both signs) of a Legendrian knot that admits filling by a decomposable Lagrangian slice disc, e.g.~the non-trivial knot $\overline{9_{46}}$. The Legendrian $\Lambda_-$ then admits a concordance both to and from the Legendrian unknot with the same number of stabilisations.
\end{cor}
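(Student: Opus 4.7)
The proof plan is essentially assembled from ingredients already laid out in the passage preceding the statement, so the task is to verify that the two directions of concordance exist for the same stabilised knot $\Lambda_-$ with matching stabilisation counts.

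First, I would construct the concordance $\Lambda_- \preceq_{conc} U_-$ (where $U_-$ denotes the Legendrian unknot with the same number of positive and negative stabilisations as $\Lambda_-$). This direction is immediate from Theorem~\ref{non-reg}: by hypothesis there is a Legendrian knot $\Lambda$ admitting a decomposable Lagrangian slice disc, so Theorem~\ref{non-reg} furnishes an exact Lagrangian concordance from the $k$-fold positive and negative stabilisation of $\Lambda$ to the $k$-fold positive and negative stabilisation of the standard unknot $U$, for all $k$ sufficiently large. Fix such a $k$ and set $\Lambda_-$ to be this stabilised $\Lambda$ and $U_-$ the correspondingly stabilised $U$.

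Next, I would construct the reverse concordance $U_- \preceq_{conc} \Lambda_-$. Starting from the given Lagrangian slice disc of $\Lambda$, I would puncture it at an interior point to obtain an exact Lagrangian concordance from the standard Legendrian unknot $U$ to $\Lambda$, which gives $U \preceq_{conc} \Lambda$ (this is the standard observation recalled in the paragraph preceding the corollary). I would then invoke Chantraine's result from \cite{Chantraine10} that the relation $\preceq_{conc}$ is preserved under simultaneous stabilisation of both Legendrian ends by the same sequence of positive and negative stabilisations. Applying $k$ positive and $k$ negative stabilisations to the ends of this concordance yields a Lagrangian concordance $U_- \preceq_{conc} \Lambda_-$ with the desired endpoints.

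Putting these two Lagrangian concordances together produces concordances in both directions between $\Lambda_-$ and $U_-$, as claimed. For the example $\overline{9_{46}}$, I would just cite the fact that this knot is known to admit a decomposable Lagrangian slice disc in the sense of Chantraine \cite{Chantraine12}, so it serves as a concrete (and smoothly non-trivial) instance. The only substantive obstacle is ensuring that the stabilisation count matches on the two sides: since Theorem~\ref{non-reg} only asserts the existence of a suitably large $k$, one simply chooses $k$ large enough to apply that theorem and then stabilises the slice-disc-derived concordance by exactly the same amount, which is permissible by \cite{Chantraine10}. No compatibility issue arises because the two concordances are produced independently and merely share their common endpoints up to Legendrian isotopy.
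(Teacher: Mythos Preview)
Your proposal is correct and follows essentially the same approach as the paper: one direction comes directly from Theorem~\ref{non-reg}, and the other comes from puncturing the slice disc to get a concordance $U \preceq_{conc} \Lambda$ and then stabilising both ends via \cite{Chantraine10}. The paper sketches exactly this argument in the paragraph immediately preceding the corollary rather than giving a separate proof.
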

\section*{Acknowledgments}
The authors are grateful to Baptiste Chantraine, Yakov Eliashberg and Oleg Lazarev for the very helpful discussions.
The first author is supported by the Knut and Alice Wallenberg Foundation under the grants KAW 2021.0191 and KAW 2021.0300, and by the Swedish Research Council under the grant number 2020-04426. The second author is supported by the GA\v{C}R EXPRO Grant 19-28628X and GA\v{C}R Lead Agency Grant 26-20231L.

\section{Preliminaries}

In this section we give the definition of regular Lagrangian cobordisms, which is due to Eliashberg--Ganatra--Lazarev \cite{EliashbergGanatraLzarev20}. Since the complement of a regular Lagrangian cobordism is Weinstein by definition, it can be understood through Kirby calculus, which we also recall.

\subsection{Regular Lagrangian cobordisms}

\label{sec:regular}

A \textbf{compact Liouville cobordism} is a compact exact symplectic cobordism $(\overline{W},d\lambda)$, whose Liouville vector field $\zeta$ is defined uniquely by the equation $d\lambda(\zeta,\cdot)=\lambda(\cdot)$, and which we require to be transverse to the boundary $\partial \overline{W}$; i.e.~the boundary is of contact-type, with contact form given by  \color{black}the restriction \color{black}$\lambda|_{T\partial{\overline{W}}}$. \color{black} By $(\partial_- \overline{W},\alpha_-)$ (resp. $(\partial_+ \overline{W},\alpha_+)$) we denote the concave (resp.~convex) contact boundary, i.e.~the component along which the Liouville vector field points inwards (resp. outwards). Recall that $\partial_+\overline{W} \neq \emptyset$ must hold by Stokes' theorem. The Liouville cobordism is said to be \textbf{Weinstein} if the Liouville vector field is gradient-like for a Morse function.

A \textbf{compact Lagrangian cobordism} in a compact Liouville cobordism is a Lagrangian embedding $(\overline{L},\partial \overline{L}) \subset (\overline{W},\partial \overline{W})$ which is cylindrical near the boundary, i.e.~tangent to the Liouville vector field there. The decomposition $\partial \overline{W}=\partial_+\overline{W} \sqcup \partial_-\overline{W}$ into a convex and concave contact boundary induces a decomposition of the boundary $\partial \overline{L}$ of the compact Lagrangian cobordism into a convex and concave Legendrian boundary
$$\partial _\pm \overline{L}=\partial \overline{L} \cap \partial_\pm \overline{W} \subset (\partial_\pm \overline{W},\alpha_\pm).$$
We will often work with the \textbf{completed} Lagrangian cobordism $L \subset W$ inside the \textbf{completed} Liouville (or Weinstein) cobordism. The completions are constructed by gluing the half-symplectisations
$$ W=((-\infty,0]_t \times \partial_- \overline{W} ,d(e^t\alpha_-)) \:\: \cup \:\:(\overline{W},d\lambda)\:\: \cup \:\: ([0,+\infty)_t \times \partial_+ \overline{W},d(e^t\alpha_+))$$
along the boundary of the compact Weinstein cobordism, and gluing cylindrical ends
$$ L=(-\infty,0] \times \partial_- \overline{L} \:\: \cup \:\:\overline{L}\:\: \cup \:\: [0,+\infty) \times \partial_+ \overline{L}$$
to the compact Lagrangian cobordism.
A completed Lagrangian (resp. Liouville/Weinstein) cobordism will simply be called a Lagrangian (resp. Liouville/Weinstein) cobordism.

Regular Lagrangian cobordisms inside Weinstein cobordisms were introduced by Eliashberg, Ganatra and Lazarev in \cite{EliashbergGanatraLzarev20}. We now recall the definition.
\begin{defn} A Lagrangian cobordism $L \subset W$ from $\Lambda_-$ to $\Lambda_+$ inside a Weinstein cobordism $W$ is  \emph{regular} if the Weinstein structure can be homotoped to one for which the Liouville vector field is tangent to $L$.
\end{defn}
The same authors also proved that if a Lagrangian cobordism is regular, then a Weinstein structure exists for which $L$ satisfies the stronger assumptions given in Definition \ref{defn:regular} below. This stronger type of regularity is also what we mostly will be working with.

Before we give the stronger definition, {\color{black} we recall, following the discussion in \cite[Section 2]{EliashbergGanatraLzarev20}}, that the union
$$\overline{L} \cup \partial_- \overline{W} \subset \overline{W}$$
of a compact Lagrangian cobordism and the concave boundary of a Liouville cobordism admits arbitrarily small compact neighborhoods
$$\overline{W}_L \subset \overline{W} \:\: \text{of} \:\: \overline{L} \cup \partial_- \overline{W} \subset \overline{W}$$
which, when chosen appropriately, \color{black}is a smooth compact \color{black} Weinstein cobordism with concave end $\partial_- \overline{W}_L=\partial_-\overline{W}$, and in which $\overline{L} \subset \overline{W}_L$ is regular. Furthermore, the natural Weinstein structure on $D^*L$ endows $\overline{W}_L$ with a Weinstein structure for which the skeleton of $\overline{W}_L$ is entirely contained inside $L$ and hence, in particular, all critical points of the Liouville vector field are contained in $L$. More precisely, in addition to $\zeta$ being tangent to $L$, we can assume that:
\begin{itemize}
\item all critical points of the Liouville vector field $\zeta$ are contained in $L$; and
\item their descending manifolds are entirely contained in $L$.
\end{itemize}
In other words, the Weinstein handle decomposition of $\overline{W}_L$ is induced from a handle-decomposition of $L$.

We are now ready to recall the definition of a \textbf{special regular Lagrangian} from \cite{EliashbergGanatraLzarev20}.
\begin{defn}
\label{defn:regular}
A Lagrangian cobordism $\overline{L} \subset \overline{W}$ from $\Lambda_- \subset \partial_- \overline{W}$ to $\Lambda_+ \subset \partial_+ \overline{W}$ in a Weinstein cobordism is a \color{black}\textbf{special regular Lagrangian} \color{black} if the Weinstein structure on $\overline{W}$ is homotopic to one obtained from $\overline{W}_L$ by adding Weinstein handles that are attached away from $\Lambda_+ \subset \overline{W}_L$. We call the induced Weinstein cobordism with concave end $\partial_+ \overline{W}_L$ and convex end $\partial_+ \overline{W}$ the \textbf{complementary Weinstein cobordism}, and its handles the \textbf{complementary handles}.
\end{defn}

\begin{rem}
Using the language of Weinstein pairs from \cite{WeinsteinRevisited} and Weinstein sectors from \cite{GanatraPardonShende}, the above can be reformulated in the following way. Consider the sector $W_{\Lambda_+}$ corresponding to the Weinstein pair $(\overline{W},\Lambda_+)$, in which $\tilde{L} \coloneqq L \cap W_{\Lambda_+}$ is naturally included as a compact Lagrangian with boundary $\partial \tilde{L}$ contained inside the sectorial boundary $\partial W_{\Lambda_+}$ of the complete sector. In this setup, $L$ is a special regular Lagrangian if and only if $\tilde{L}$ can be realised as a subset of the skeleton of $W_{\Lambda_+}$ for some homotopic Weinstein structure. In this setting, there is a natural complementary sectorial Weinstein cobordism, which is given as the complement of a neighbourhood of $\tilde{L}$ in the sector.
\end{rem}

Since regular Lagrangians have a complement that is Weinstein, they can be presented and studied by using Legendrian handle-body theory, e.g.~Kirby diagrams \cite{Gompf94} when $\dim W=4$. The regular Lagrangians can also be manipulated via Weinstein homotopy moves which, in this dimension, translate to Kirby moves.

\subsection{Standard moves for Kirby diagrams}
Here we discuss the tools from Kirby calculus that we need to understand and modify the regular cobordisms that we are interested in. We refer to work by Gompf \cite{Gompf94} for a good introduction to Kirby calculus in the setting of contact topology. Our Kirby diagrams will consist of a number of attaching 0-spheres inside $(\R^3_{xyz},dz-ydx)$ which have vanishing $y$--coordinates. The Legendrian arcs are allowed to enter the handles both from the left and right in the front projection, where the arcs are required to have constant $z$--coordinate. The attaching 1-spheres will typically be depicted in red colour, while Legendrian knots will be black. (The latter can be thought of as stops of a Weinstein sector that is built by attaching the one and two-handles.)

In order to relate two different diagrams, we need the following moves that relate different Kirby diagrams for the same Weinstein manifold.

\emph{Move K--0: Legendrian isotopy in the complement of the attaching spheres:} Both Legendrian knots in the contact manifold, as well as attaching 1-spheres, can be deformed by a Legendrian isotopy that supported in the complement of the attaching 0-spheres. By our convention the attaching 0-spheres have a vanishing $y$--coordinate, which means that any Legendrian arc whose front projection has non-zero slope can pass over the small balls where the attaching 0-spheres live. See Figure \ref{fig:m0}.

\begin{figure}[h]
\includegraphics[scale=1.25]{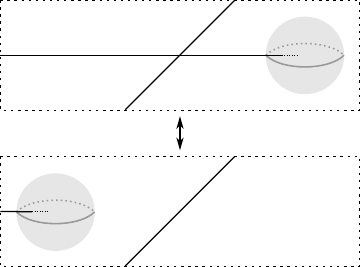}
\caption{The Kirby move K--0: Moving a Legendrian arc of positive slope over a component of the attaching 0-sphere, together with the possible Legendrian arcs entering it.}
\label{fig:m0}
\end{figure}

\emph{Move K--I: Introducing/cancelling a pair of handles:} A Weinstein one- and two-handle that are in cancelling position can be either introduced or removed. This is a standard Kirby move; see \cite[Section 5]{DingGeiges08} in work by Ding and Geiges for the version in contact topology. This move is shown in Figure \ref{fig:m1}.

\begin{figure}[h]
\includegraphics[scale=1.25]{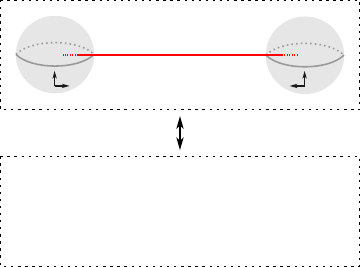}
\caption{The Kirby move K--I: The Kirby move that consists of cancelling a pair consisting of a Weinstein 1-handle and a Weinstein 2-handle in standard position corresponds to the above deformation of the front projection.}
\label{fig:m1}
\end{figure}

\emph{Move K--II: Handle-sliding parallel strands:} Handling sliding parallel strands of a Legendrian over \color{black}a Weinstein \color{black} two-handle which is in cancelling position gives rise to a move shown in Figure \ref{fig:m2}. This is a standard Kirby move; the version in contact manifolds was described by Ding and Geiges in \cite{DingGeiges08}; and Casals and Murphy in \cite{CasalsMurphy}.

\begin{figure}
\includegraphics[scale=1.25]{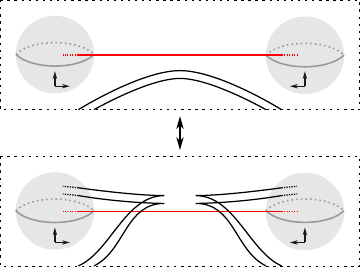}
\caption{The Kirby move K--II: The bottom figure shows the effect in the front projection of the Kirby move that consists of handle-sliding $k=2$ parallel Legendrian strands shown on top over the \color{black}Weinstein \color{black} two-handle whose attaching sphere is the knot shown in red.}
\label{fig:m2}
\end{figure}

\emph{Move K--III: Introducing a double-stabilisation:} Given any number of parallel strands that pass through a single one-handle, one can move the \color{black} bottom strand to the top\color{black}, introducing a double stabilisation of the same. See \cite[Figure 17]{Gompf94} for one version, also its reflection in the z-axis is a possible move. See Figure \ref{fig:m3}.

\begin{figure}
\includegraphics[scale=1.25]{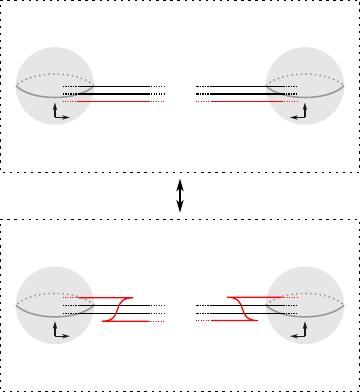}
\caption{The Kirby move K--III: one can cyclically permute the strands going through the handle and introduce a double-stabilization.}
\label{fig:m3}
\end{figure}

\emph{Move K--IV: Moving strands around the attaching sphere:} The bottom strand to the left (resp. right) of the left (resp. right) attaching sphere can be moved to the top of the right (resp. left) side of the same attaching sphere. The strand is given an additional downward cusp-edge in the direction oriented away from the attaching sphere. See Figure \ref{fig:m4} \color{black} for the move in the front projection, and Figure \ref{fig:m4lag} for how the move can be described using the Lagrangian projection.\color{black}

\begin{figure}
\includegraphics[scale=1.25]{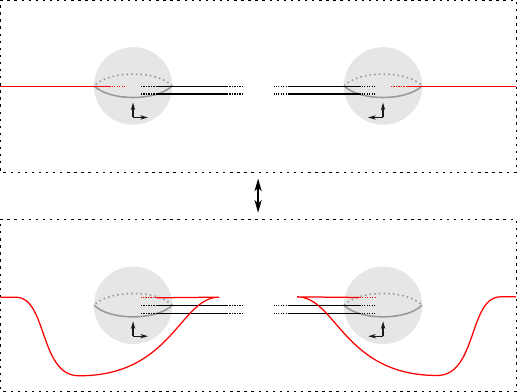}
\caption{The Kirby move K--IV: moving a strand entering the one handle from one side of the attaching sphere to the opposite side.}
\label{fig:m4}
\end{figure}

\begin{figure}
\includegraphics[scale=1.25]{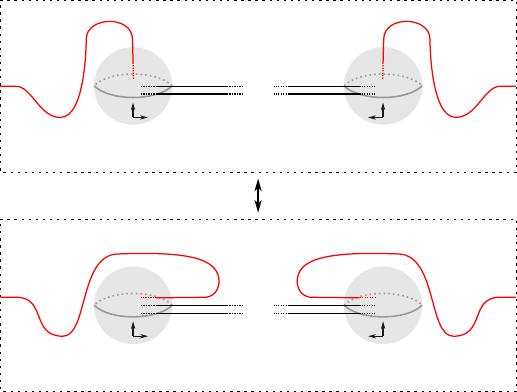}
\caption{The Kirby move K--IV: moving a strand entering the one handle from one side of the attaching sphere to the opposite side shown in the Lagrangian projection. The top shows the rotation of the arcs performed half-way.}
\label{fig:m4lag}
\end{figure}

\section{Regular Lagrangian cobordisms from Legendrian ambient surgery}

The standard technique for constructing a regular Lagrangian cobordisms inside of a symplectisation is by using standard Lagrangian handles that correspond to certain surgery constructions on Legendrians. Notably, in \cite{ConwayEtnyreTosun21}  Conway, Etnyre and Tosun showed that decomposable Lagrangian fillings are regular (see Section \ref{decompreg} below for the definition). In Proposition \ref{prop:surgeries} we generalise this statement to arbitrarily ambient Legendrian surgeries (see Section \ref{sec:ambient} for the definition).

\subsection{Decomposable fillings}
\label{decompreg}

A particularly nice class of Lagrangian cobordisms in $\R \times S^3$ are the decomposable ones, which were defined by Chantraine in \cite{Chantraine12}. By definition, a Lagrangian cobordism is decomposable if it consists of a concatenation of the following simple types of Lagrangian cobordisms:
\begin{itemize}
\item the exact Lagrangian trace-cobordism induced by a Legendrian isotopy;
\item a standard disc-filling of the standard $\tt{tb}=-1$ unknot (this is a zero-handle of the cobordism with an empty negative end), together with other components which all are trivial cylinders; and
\item a Legendrian standard one-handle from a Legendrian knot to the result of a cusp-connected sum performed on it, together with other components which all are trivial cylinders. See Figure \ref{fig:surgery} for the Legendrians before and after a cusp-connected sum, shown at the bottom and top, respectively.
\end{itemize}

\begin{figure}
\labellist
\pinlabel $\color{red}\eta$ at 86 10
\pinlabel $\Lambda_+$ at 48 90
\pinlabel $\Lambda$ at 48 15
\endlabellist
\includegraphics[scale=1.25]{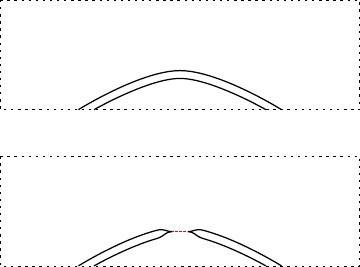}
\caption{Ambient Legendrian surgery:
 The Legendrian $\Lambda$ is deformed in a neighborhood of the surgery arc (\color{black}$\eta$ shown at the bottom\color{black}) to yield the Legendrian \color{black}$\Lambda_+=\Lambda_\eta$ shown on top. \color{black} There is a standard Lagrangian 1-handle cobordism with concave boundary the Legendrian $\Lambda$ before the surgery, and convex boundary the Legendrian $\Lambda_+=\Lambda_\eta$ produced by the surgery.}
\label{fig:surgery}
\end{figure}

\begin{figure}
\labellist
\pinlabel $\Lambda$ at 48 89
\pinlabel $\color{red}\eta$ at 86 86
\pinlabel $\color{red}S_\eta$ at 86 18
\endlabellist
\includegraphics[scale=1.25]{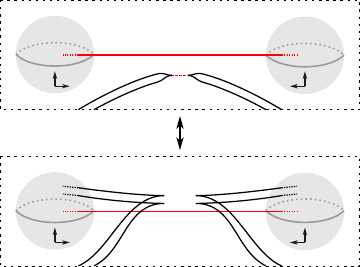}
\caption{Performing the surgery inside a cancelling handle pair: A small surgery-arc $\eta$ that corresponds to a standard cusp-connected sum of two facing cusp edges can be placed inside a one-handle by a K-II move as shown in this figure. We will indicate the fact that the handle contains the surgery-disc $\eta$ by denoting the cancelling one-handle $S_\eta$. The surgery can now be performed inside the handle. cancelling the handle after the surgery by the K-II move again yields the Legendrian $\Lambda_\eta$ resulting by the surgery shown in the bottom of Figure \ref{fig:surgery}.}
\label{fig:surgery2}
\end{figure}

Decomposable Lagrangian cobordisms \color{black} were \color{black} shown to be regular in the work of Conway, Etnyre and Tosun \cite{ConwayEtnyreTosun21}. It is still unknown whether the converse is true or not. The proof in \cite{ConwayEtnyreTosun21} moreover gives an explicit recipe for finding the attaching spheres on the complement of the Legendrian. We proceed to explain this recipe in the case of a decomposable Lagrangian filling.

\emph{The 0-handles:} For each zero-handle we consider $(\C^2,-d^c\rho)$, where $\rho(\mathbf{z})=\frac{\|\mathbf{z}\|^2}{4},$ with the standard filling $\mathfrak{Re}\C^2 \subset \C^2$ of the standard Legendrian unknot $\partial_\infty\left(\mathfrak{Re}\C^2\right) \subset S^3$ of $\tt{tb}=-1$. This filling is clearly is tangent to the radial Liouville vector field. In the general case, when the Lagrangian cobordism has several 0-handles, one has to consider a Weinstein connected sum of such Weinstein four-balls. Since this Weinstein connected sum again produces the standard sphere, and since these handles can be determined from the number of zero-handles, we will omit these 1-handles from the diagram.

\emph{The 1-handles:} We add a pair of cancelling Weinstein one and two-handles just above the region where the cusp-connected sum takes place; this is move K--I shown in Figure \ref{fig:m1}. The resulting diagram after having performed the cusp-connected sum is shown at the top of Figure \ref{fig:m2}. \color{black}Performing the K--II move as shown in the same figure places the two strands of the Legendrian that underwent cusp-connected sum inside the Weinstein 1-handle. (Alternatively, one could handle-slide the two facing cusp-edges together with the surgery arc into the one-handle similarly to the K--II move as shown in Figure \ref{fig:surgery2}.) \color{black} It can readily be seen that the corresponding standard Lagrangian one-handle in this position \color{black} can be made \color{black} tangent to the standard Liouville structure on the Weinstein one-handle. In particular, the critical point of the Weinstein one-handle coincides with the critical point of the standard Lagrangian one-handle. See Proposition \ref{prop:surgeries} for the general statement in arbitrary dimensions.

\subsection{The case of a general ambient Legendrian surgery}
\label{sec:ambient}

In \cite{Dimitroglou:Ambient} the first author generalised the construction of cusp-connected sum to the construction of a Legendrian ambient $k$-surgery on a Legendrian $\Lambda^n \subset Y^{2n+1}$, for $k \le n$. Legendrian ambient $k$-surgery produces an embedding $\Lambda_\eta \subset Y$ of the result of a $k$-surgery on $\Lambda$ along a framed sphere $\partial \eta \subset \Lambda$, where $\eta\subset Y$ is a choice of a so-called isotropic surgery $k+1$-disc. The choice of framing of the embedded sphere $\partial \eta$, i.e.~a trivialisation of its normal bundle inside $\Lambda$, is also a part of the data, together with a trivialisation of the conformal symplectic normal bundle of $\eta$. The isotropic surgery disc $\eta$ is required to intersect the Legendrian $\Lambda$ precisely along its boundary, where it satisfies the property that the direction in $\eta$ that is normal to $\partial \eta$ also is normal to the Legendrian $\Lambda$. In addition, the trivialisation of the conformal symplectic normal bundle of $\eta$ and the framing of $\partial \eta \subset \Lambda$ are assumed to be compatible in the following sense: the trivialisation of the conformal symplectic normal bundle of $\eta$ is required to be constant along $\partial \eta$ with respect to its framing.

The upshot with the more general definition of Legendrian ambient surgery compared to the cusp-connected sum is that the former construction does not require the Legendrians to be in some particular position; up to contactomorphism the surgery only depends on the choice of Legendrian $\Lambda$, the isotropic surgery disc $\eta$, and the choice of framings of $\partial \eta \subset \Lambda$ and $\eta \subset Y$ up to homotopy.

The Legendrian embedding $\Lambda_\eta$ produced by the ambient Legendrian surgery can be assumed to live inside an arbitrarily small neighbourhood of $\Lambda \cup \eta$. In addition, the construction produces an exact Lagrangian handle-attachment cobordism $L_\eta$ from $\Lambda$ to $\Lambda_\eta$ on which the restriction $t|_{L_\eta}$ of the symplectisation coordinate $t$ on $\R_t \times Y$ has a unique Morse critical point of index $k+1$.

We refer to \cite[Section 4]{Dimitroglou:Ambient} for the precise description of the construction. The most important feature of the general construction that we rely on here is that the construction is performed by implementing a standard model for the Lagrangian handle in the symplectisation of a standard contact neighbourhood of the isotropic disc $\eta$.

In the case $n=k+1=1$ the construction of the ambient Legendrian surgery in a neighbourhood of $\eta$ is depicted in Figure \ref{fig:surgery3}; in this case there is no choice of conformal symplectic normal bundle.

In general, the standard neighbourhood is constructed by using the choice of trivialisation of the conformal symplectic normal bundle of $\eta$, and is of the form
$$([-\epsilon,\epsilon]_z \times D_\epsilon^*\eta \times D_\epsilon^*D^{n-k-1},dz-\lambda_\eta-\lambda_{D^{n-k-1}}) \hookrightarrow (Y^{2n+1},\alpha),$$
where by $\lambda_M \in \Omega^1(T^*M)$ we denote the tautological one-form. More precisely, the surgery $k+1$-disc is identified with the subset
$$\{0\} \times 0_\eta \times \{0\} \subset \{0\} \times  0_\eta \times 0_{D^{n-k-1}} \subset [-\epsilon,\epsilon]_z \times D^*\eta \times D^*D^{n-k-1}$$
of the zero-section in the above standard neighbourhood, while the Legendrian $\Lambda$ is identified with the Lagrangian co-normal bundle $N^*_{(\partial \eta) \times D^{n-k-1}}=N^*_{\partial \eta} \times 0_{D^{n-k-1}}$ of $(\partial \eta) \times D^{n-k-1}$, lifted to have vanishing $[-\epsilon,\epsilon]_z$-coordinate.

The choice of conformal symplectic normal bundle induces an identification of the $D^*D^{n-k-1}$-factor in the above neighbourhood. Since $N^*_{(\partial \eta)\times D^{n-k-1}}$ is a trivial $\R$-fibration over $(\partial \eta)\times  D^{n-k-1}$, the compatibility of the framing of $\partial \eta$ and the above conformal symplectic normal bundle implies that the $\R$-fibre of the conormal bundle together with the standard coordinates on the $0_{D^{n-k-1}}$-factor together yield the framing of $\partial \eta \subset \Lambda$.

\begin{figure}
\labellist
\pinlabel $\color{red}\eta$ at 88 153
\pinlabel $\Lambda$ at 8 160
\pinlabel $\Lambda'$ at 8 100
\pinlabel $\Lambda_\eta$ at 8 40
\endlabellist
\includegraphics[scale=1.25]{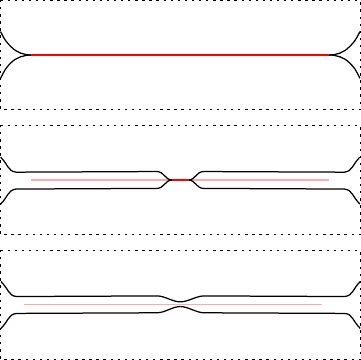}
\caption{The top depicts the front projection of the Legendrian link $\Lambda$ and the surgery arc $\eta$ in the standard jet-neighborhood of a surgery-arc. The middle shows the Legendrian $\Lambda'$ obtained by a Legendrian isotopy of $\Lambda$ supported near $\eta$ that moves the two cusp-edges closer to each other. The bottom picture depicts the result of a cusp-connected sum of $\Lambda'$. This process is the definition of an ambient Legendrian surgery $\Lambda_\eta$ of $\Lambda$ defined by the surgery arc $\eta$.}
\label{fig:surgery3}
\end{figure}

Passing to the symplectisation, we can find a neighbourhood of $[0,1] \times \eta \subset \R \times Y$ in the symplectisation of the form
$$ D^*[0,1] \times D^*\eta \times D^*D^{n-k-1} $$
in which the cone $[0,1] \times \eta$ is identified with $0_{[0,1]} \times 0_\eta \times \{0\}$ and the cone $[0,1] \times \Lambda$ is identified with $0_{[0,1]} \times N^*_{\partial \eta} \times 0_{D^{n-k-1}}$. The standard Lagrangian $k+1$-handle is constructed to be a cylinder over $\Lambda$ outside the above neighbourhood of the symplectisation, while the intersection with the above neighbourhood is an explicitly constructed neighbourhood of the core-disc; see \cite[Section 4.2.2]{Dimitroglou:Ambient}. We will only need a description of $L_\eta$ in a neighbourhood of the zero-section
$$0_{[0,1]} \times 0_\eta \times 0_{D^{n-k-1}} \subset D^*[0,1] \times D^*\eta \times D^*D^{n-k-1}.$$
Namely, in a small neighbourhood of this zero-section, the standard handle $L_\eta$ is given by the Lagrangian co-normal bundle of a $k+1$-dimensional disc
$$\overline{C}_\eta=\Gamma \times \{0\} \subset [0,1] \times \eta \times D^{n-k-1}$$
where $\Gamma \subset [0,1]\times\eta$ is a smooth codimension one embedding of a disc given as the union of $[0,1/2] \times \partial \eta$ and a suitable graph of a function $\eta \to [1/2,1]$. In particular, this means that $L_\eta$ intersects the zero-section of this standard neighbourhood in a representative $\overline{C}_\eta \subset \overline{L}_\eta$ of the $k+1$-dimensional core-disc of the handle. Further, we note that the core disc $\overline{C}_\eta$ is the boundary of an isotropic $k+2$-disc $\overline{D}_\eta \subset 0_{[0,1]} \times 0_{\eta} \times \{0\}$ given by the sub-level set of the above graph. Here $\overline{D}_\eta$ is homeomorphic to a disc and has boundary with corners; one boundary stratum is the disc $\overline{C}_\eta$, while the other stratum is $\{0\} \times 0_\eta \times \{0\} \subset 0_{[0,1]}\times 0_\eta \times \{0\}$. By
$$C_\eta=\overline{C}_\eta \cup \left((-\infty,0] \times \partial \eta \right)\:\: \text{and} \:\: D_\eta=\overline{D}_\eta \cup \left((-\infty,0] \times \partial \eta\right)$$
we denote the cylindrical extensions of these discs. In the coordinates used in \cite[Section 4.2.2]{Dimitroglou:Ambient} the $k+1$-disc $C_\eta$ is given by the cusp edge of the front projection intersected with $\{x_{k+2}=\ldots=x_n=0\}$ while the $k+2$-disc $D_\eta$ is the subset of $T^*\R^{n+1}$ that is contained in the zero-section intersected with $\{x_{k+2}=\ldots=x_n=0\}$, with boundary $C_\eta$. 

The following lemma is a direct consequence of the construction in terms of a local model implemented in a choice of standard neighbourhood.
\color{black}
\begin{lem}[\cite{Dimitroglou:Ambient}]
 \color{black} The Legendrian isotopy class of the resulting Legendrian $\Lambda_\eta$ only depends on $\Lambda\cup\eta$, together with the data of the framings, up to contact isotopy. The Hamiltonian isotopy class of the resulting Lagrangian standard handle-attachment cobordism
$$(\overline{L}_\eta,\partial \overline{L}_\eta) \subset ([0,1] \times Y,(\{0\} \times \Lambda )\cup (\{1\} \times \Lambda_\eta))$$
also satisfies the same dependence.
\end{lem}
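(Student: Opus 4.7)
The plan is to reduce both claims to uniqueness of standard contact (respectively symplectic) neighborhoods of $\eta$, exploiting the fact that by construction both $\Lambda_\eta$ and $\overline{L}_\eta$ are defined by a fixed local recipe implemented inside the standard model neighborhood of the isotropic disc $\eta$. Once an appropriate parametric and relative neighborhood theorem is in place, the lemma reduces to a nearly formal transport argument.

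First I will treat the Legendrian statement. Given two triples $(\Lambda_i,\eta_i,F_i)$, $i=0,1$, of Legendrian, surgery disc and framing data which are related by a contact isotopy of $(Y,\alpha)$ respecting the framings, the goal is to extend this isotopy to one carrying $(\Lambda_0)_{\eta_0}$ to $(\Lambda_1)_{\eta_1}$. The key input is a relative and parametric isotropic neighborhood theorem for $\eta_t$ with boundary meeting $\Lambda_t$ along $\partial \eta_t$: it produces a smooth family of contact embeddings of the standard model
\[
\bigl([-\epsilon,\epsilon]_z \times D_\epsilon^*\eta \times D_\epsilon^*D^{n-k-1},\, dz-\lambda_\eta-\lambda_{D^{n-k-1}}\bigr) \hookrightarrow (Y,\alpha)
\]
sending the standard $\eta$ and $\Lambda$ to $\eta_t$ and a neighbourhood of $\partial \eta_t$ in $\Lambda_t$, respecting the framings. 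Since $\Lambda_{\eta}$ is obtained from $\Lambda$ by a modification supported entirely in this model, these embeddings transport the surgery of the initial configuration to that of the target. Integrating the induced time-dependent contact vector field gives the required ambient contact isotopy, and hence a Legendrian isotopy of the surgered Legendrians.

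The argument for $\overline{L}_\eta$ runs in complete parallel inside the symplectisation $(\R_t \times Y, d(e^t \alpha))$, with the isotropic neighborhood theorem replaced by the Weinstein neighborhood theorem applied to the isotropic cylinder $[0,1] \times \eta$. Using the same framing data one obtains a smooth family of symplectic embeddings of the model $D^*[0,1] \times D^*\eta \times D^*D^{n-k-1}$, compatible at $t=0,1$ with the Legendrian models chosen above. Since $\overline{L}_\eta$ is cylindrical outside this model and, inside, agrees with the conormal of the codimension-one graph $\Gamma \subset [0,1] \times \eta$ -- a subset depending only on the model -- integrating the induced symplectic vector field produces an ambient Hamiltonian isotopy carrying one handle cobordism to the other.

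The main obstacle is the relative parametric neighborhood theorem for the isotropic disc $\eta$ with boundary meeting the Legendrian $\Lambda$ along $\partial \eta$. This requires combining two Moser-type arguments along the corner stratum $\partial \eta$: the Legendrian neighborhood theorem for $\Lambda$ near $\partial \eta$ and the isotropic neighborhood theorem for $\eta$ itself, where the compatibility between the framing of $\partial \eta \subset \Lambda$ and the conformal symplectic trivialisation of $\eta$ is precisely what allows the two local normal forms to be glued into a single model embedding. Once this is verified, the parametric version follows by a standard one-parameter extension of the Moser trick, which also handles the symplectic analogue needed for the cobordism statement.
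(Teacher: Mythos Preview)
Your proposal is correct and follows exactly the approach the paper indicates: the paper does not give a detailed proof but simply remarks (just before stating the lemma) that it is ``a direct consequence of the construction in terms of a local model implemented in a choice of standard neighbourhood.'' Your argument is a faithful and careful unpacking of this sentence---the parametric/relative isotropic neighbourhood theorem together with the transport of the fixed local model is precisely what makes the construction well-defined up to contact (respectively Hamiltonian) isotopy.
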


\color{black}
In the special case when $\dim\Lambda=\dim\eta=1$, the Legendrian $\Lambda_\eta$ is constructed by performing a cusp-connected sum in a standard neighbourhood $J^1\R$ of the arc $\eta$ as outlined in Section \ref{sec:cuspconn} below.

Using the local model for the surgery we prove the following generalisation of the  result from \cite{ConwayEtnyreTosun21} by Conway, Etnyre and Tosun, where they showed that a decomposable Lagrangian cobordism is regular.
\begin{prop}
\label{prop:surgeries}
Let $\eta \subset Y^{2n+1}$ be an isotropic surgery disc of dimension $k+1 \le n$ with boundary $\partial \eta \subset \Lambda$. The corresponding standard Lagrangian $k+1$-handle cobordism $L_\eta \subset \R \times Y^{2n+1}$ inside the symplectisation is regular; this is a Lagrangian cobordism with concave Legendrian boundary $\Lambda$, and convex boundary the Legendrian $\Lambda_\eta$ produced by the surgery. More precisely, the symplectisation admits a Weinstein structure for which:
\begin{enumerate}
\item there are two Weinstein handles in cancelling position of index $k+1$ and $k+2$;
\item $L_\eta$ is contained in the Weinstein $k+1$-handle and is tangent to the Liouville flow, where the critical point of the Liouville flow of the handle coincides with the critical point of the restriction $t|_{L_\eta}$ of the symplectisation coordinate;
\item The descending manifold of the Weinstein $k+1$-handle coincides with the core-disc $C_\eta$ consisting of $\overline{C}_\eta \subset \overline{L}_\eta$ described above with the cylinder $(-\infty,0] \times \partial \eta$ adjoined;
\item The closure of the descending manifold of the Weinstein $k+2$-handle is the isotropic $k+2$-disc $D_\eta$ consisting of $\overline{D}_\eta$ described above with the cylinder $(-\infty,0] \times \eta$ adjoined. 
\end{enumerate}
In the special case when $n=1$ and $k=0$, the corresponding Kirby diagram describing the Legendrian knot $\Lambda_\eta$ and the handles can be obtained by replacing the top of Figure \ref{fig:darbouxball} (which describes $\Lambda \cup \eta$ in a neighbourhood of $\partial \eta$) with the diagram at the bottom of the same figure.
\end{prop}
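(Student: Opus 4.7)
\textbf{Proof plan for Proposition \ref{prop:surgeries}.} The strategy is to construct the desired Weinstein structure inside the standard neighbourhood
$$U \coloneqq D^*[0,1] \times D^*\eta \times D^*D^{n-k-1} \subset \R \times Y$$
in which $L_\eta$ admits an explicit description, and then to glue the result to the trivial Liouville structure $d(e^t\alpha)$ on the complement using the standard interpolation of Liouville forms. Throughout, we identify $U$ with a neighbourhood of the zero-section in $T^*N$, where
$$N \coloneqq [0,1] \times \eta \times D^{n-k-1}.$$
Under this identification, the zero-section contains both the isotropic $(k+2)$-disc $\overline{D}_\eta \subset [0,1] \times \eta \times \{0\}$ and, on part of its boundary, the $(k+1)$-disc $\overline{C}_\eta$; while the Lagrangian handle $L_\eta$ agrees, near the zero-section, with the Lagrangian conormal bundle $N^*_{\Gamma \times \{0\}}$ of the codimension-one graph-like hypersurface $\Gamma \times \{0\} \subset N$. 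In particular $\overline{C}_\eta = L_\eta \cap 0_N$.

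First I would construct a Morse function $f \co N \to \R$ with exactly two critical points in cancelling position: a point $p_{k+1}$ of index $k+1$ located at the centre of $\overline{C}_\eta$ and a point $p_{k+2}$ of index $k+2$ located at the centre of $\overline{D}_\eta$. Since $\overline{C}_\eta$ lies in the boundary of $\overline{D}_\eta$, one can arrange that $f$ admits a gradient-like vector field (with respect to a suitable Riemannian metric on $N$) whose descending manifold at $p_{k+1}$ equals $\overline{C}_\eta$, whose descending manifold at $p_{k+2}$ equals $\overline{D}_\eta$, and which is tangent to the hypersurface $\Gamma \times \{0\}$. The key ancillary requirement is the last one: it is here that we use that $\overline{C}_\eta \subset \Gamma \times \{0\}$ and that $p_{k+2}$ can be placed inside $\Gamma \times \{0\}$ as well. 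On $U \cong T^*N$ I would then take the Weinstein structure induced by the function $F(q,p) = f(q) + \tfrac{1}{2}|p|^2$ using a metric-compatible Liouville form, which has exactly the two critical points $(p_{k+1},0)$ and $(p_{k+2},0)$ of indices $k+1$ and $k+2$ respectively, and whose Liouville vector field preserves both the zero-section and the conormal bundle $N^*_{\Gamma \times \{0\}}$.

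Next I would verify the four claimed properties. Property (2) follows since the Liouville vector field is tangent to $L_\eta = N^*_{\Gamma \times \{0\}}$ near $\overline{C}_\eta$, with the unique critical point inside $L_\eta$ being $p_{k+1}$, which by construction coincides with the unique critical point of $t|_{L_\eta}$ (this is the point where $\overline{C}_\eta$ touches the level set $\{t = 1/2\}$). The identification of the descending manifolds in properties (3) and (4) with the cylindrical extensions $C_\eta$ and $D_\eta$ is a consequence of the fact that the extended Liouville vector field equals $\partial_t$ on the negative symplectisation end, so that the descending manifolds of $p_{k+1}$ and $p_{k+2}$, taken inside the full cobordism, are obtained by adjoining the cylinders $(-\infty,0] \times \partial \eta$ and $(-\infty,0] \times \eta$, respectively, to $\overline{C}_\eta$ and $\overline{D}_\eta$. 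The attaching sphere of the $(k+2)$-handle intersects the belt sphere of the $(k+1)$-handle transversely in a single point, so the pair is indeed in cancelling position, giving property (1) and recovering the trivial Liouville structure up to Weinstein homotopy.

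Finally, for the special case $n=1$, $k=0$, the local model reduces to a $J^1\R$-neighbourhood of $\eta$, and I would exhibit the handle diagram directly: the cancelling pair consists of a contact $1$-handle placed so that the two feet of the attaching $0$-sphere straddle the surgery arc $\eta$, together with a $2$-handle whose attaching Legendrian is the obvious unknot going once through the $1$-handle, chosen so that the Lagrangian $0$-handle cobordism at the bottom of Figure \ref{fig:surgery} is threaded through the $1$-handle just as in the picture of Figure \ref{fig:surgery2}. One then reads off the Kirby move that transforms the top of Figure \ref{fig:darbouxball} into its bottom by applying the K--I move to introduce the cancelling pair and then pushing the cusp edges of $\Lambda$ through the $1$-handle as in move K--II. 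The main technical obstacle in the general-dimensional case is ensuring that $f$ can be chosen with gradient tangent to $\Gamma$ (so that the Liouville flow preserves $L_\eta$) while still producing exactly the descending $(k+2)$-disc $\overline{D}_\eta$ and exactly the descending $(k+1)$-disc $\overline{C}_\eta$; once this compatible Morse data has been built, the rest of the argument is a direct bookkeeping exercise of standard type.
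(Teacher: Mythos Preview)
Your strategy differs from the paper's. The paper does not try to induce the Weinstein structure from a Morse function on the base $N$; instead it directly identifies a neighbourhood of $C_\eta$ with a standard Weinstein $(k{+}1)$-handle $D^*D^{k+1}\times D^{2(n-k)}$, checks in that explicit model that $L_\eta$ becomes the Lagrangian $L_0=0_{D^{k+1}}\times(\mathfrak{Im}\,\C^{n-k}\cap D^{2(n-k)})$, which is manifestly tangent to the standard handle Liouville field, and only afterwards attaches the $(k{+}2)$-handle as a small neighbourhood of $D_\eta$ away from $L_\eta$. The advantage of that route is that tangency of the Liouville flow to $L_\eta$ is read off from a single explicit model rather than being a property one must engineer globally.

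Your approach has a genuine gap at exactly the point you flag as the ``main technical obstacle''. First a small inconsistency: since $L_\eta=N^*_{\Gamma\times\{0\}}$ near the zero-section, one has $\overline{C}_\eta=L_\eta\cap 0_N=\Gamma\times\{0\}$, so $p_{k+2}$, which you place at the centre of $\overline{D}_\eta$, lies in the \emph{interior} of $\overline{D}_\eta$ and hence \emph{not} in $\Gamma\times\{0\}$; your sentence asserting it can be placed there is self-contradictory. More seriously, the implication ``gradient-like field on $N$ tangent to $\Gamma\times\{0\}$ $\Rightarrow$ Liouville field on $T^*N$ tangent to $N^*_{\Gamma\times\{0\}}$'' is not automatic. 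The formula $F(q,p)=f(q)+\tfrac12|p|^2$ is a Lyapunov function, not a Liouville form, and the phrase ``metric-compatible Liouville form'' does not pin down a structure; for a generic Weinstein structure on $T^*N$ with the prescribed critical points the Liouville vector field has a fibre component that need not preserve conormal directions. You would have to specify the Liouville form handle-by-handle (e.g.\ something of the shape $2p\,dq+q\,dp$ on the $T^*D^{k+1}$ factor and the radial form on the complementary factor) and then check that these local choices patch so that the conormal stays invariant; this is precisely the work the paper sidesteps by using the single explicit model $L_0$.

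For the case $n=1$, $k=0$, the paper does not use K--I and K--II. It first produces the raw diagram of Figure~\ref{fig:darbouxball-before} (one-handle attached at $\partial\eta$, two-handle $S_\eta$ coinciding with $\eta$ outside the attaching region, and $\Lambda_\eta$ obtained by cusp-connected sum inside the one-handle), and then applies move K--IV to rotate the $\Lambda_\eta$-strands so that they enter the one-handle on the same side as $S_\eta$, yielding Figure~\ref{fig:darbouxball}. Your description via K--I/K--II with the $0$-sphere ``straddling'' $\eta$ is a different (and vaguer) picture; if you pursue it you should make the attaching data explicit and explain why the resulting diagram agrees with the one in Figure~\ref{fig:darbouxball}.
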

\begin{proof}
Recall that the standard Weinstein index-$i$ handle of dimension $2(n+1)$ is given by the standard symplectic structure on
$$ (D^*D^i \times D^{2(n+1-i)},d(p\,dq)+\omega_0)$$
endowed with a Liouville form for which $\{(0,0)\}$ is a unique critical point of index $i$, whose descending manifold (i.e.~the skeleton of the handle, also known as core-disc) is the isotropic $i$-disc $0_{D^i} \times \{0\}$, while its ascending manifold (i.e.~co-core disc) is the coisotropic $2n-i$-disc $D^*_0 D^i \times D^{2(n+1-i)}$. Moreover, the Liouville vector-field can be taken to be the radial Liouville vector field corresponding to the Liouville form $-d^c\|\mathbf{z}\|^2/4$ in the second component.

We begin by symplectically identifying a neighbourhood of the isotropic $k+1$-disc $C_\eta$ with a standard Weinstein $k+1$-handle embedded in $\R \times Y$, which has been attached along the isotropic sphere $\partial \eta \subset \partial((-\infty,0] \times Y)$. Here the framing of $\partial \eta$ in $Y$ is induced from the framing of $\partial \eta \subset \Lambda$. Then we denote by $\overline{X}_a \subset \R \times Y$ the embedded Weinstein sub-cobordism consisting of $(-\infty,0] \times Y$ together with the attached Weinstein $k+1$-handle. This sub-cobordism has a smooth contact boundary $\partial \overline{X}_a \subset \R \times Y$ along which the Liouville vector field induced by the handle-attachment is outwards pointing. (Note that this is different from the original Liouville vector field $\partial_t$ on the symplectisation.)

\begin{figure}[t!]
\begin{center}
\labellist
\pinlabel $\color{red}\eta$ at 125 160
\pinlabel $\color{red}S_\eta$ at 125 55
\pinlabel $\Lambda$ at 5 165
\pinlabel $\Lambda_\eta$ at 6 60
\endlabellist
\includegraphics[scale=1.25]{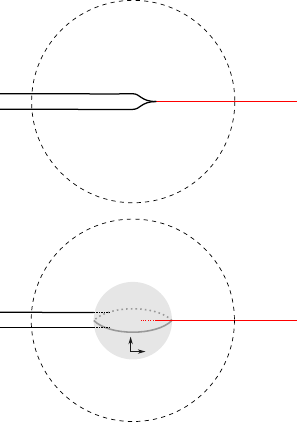}
\caption{Top: The front projection in a Darboux ball that depicts a surgery-arc $\eta$ with boundary on a Legendrian $\Lambda$. Bottom: A Kirby diagram where a one-handle has been attached along $\partial \eta$, a cancelling two-handle has been attached along $S_\eta$ which coincides with $\eta$ outside the attaching region, and where $\Lambda_\eta$ can be obtained by moving the cusp edges of $\Lambda$ containing $\partial \eta$ towards each other in the one-handle, inside of which a cusp-connected sum is performed. Note that $\Lambda_\eta$ and $S_\eta$ enter the handles on opposite sides of the attaching sphere.}
\label{fig:darbouxball-before}
\end{center}
\end{figure}

\begin{figure}[t!]
\label{comp_surg_arc}
\begin{center}
\labellist
\pinlabel $\color{red}\eta$ at 125 160
\pinlabel $\color{red}S_\eta$ at 125 55
\pinlabel $\Lambda$ at 5 165
\pinlabel $\tilde\Lambda_\eta$ at 6 60
\endlabellist
\includegraphics[scale=1.25]{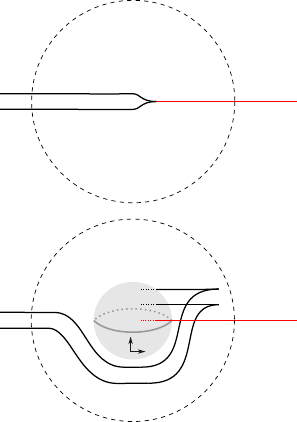}
\caption{The Kirby diagram on the bottom is obtained from the one on the bottom of Figure \ref{fig:darbouxball-before} by a contact isotopy which is the identity in a neighbourhood of $S_\eta$, is supported in the one-handle, and which moves $\Lambda_\eta$ so that it enters the one-handle on the same side as $S_\eta$, with the two strands having larger $z$--coordinates.} 
\label{fig:darbouxball}
\end{center}
\end{figure}

Further, we may assume that this identification takes $L_\eta \cap \overline{X}_a$ to the Lagrangian
$$L_0 \coloneqq 0_{D^{k+1}} \times (\mathfrak{Im}\C^{n-k} \cap D^{2(n-k)}) \subset (D^*D^{k+1} \times D^{2(n-k)},d(p\,dq)+\omega_0) $$
contained in the standard Weinstein $k+1$-handle that is everywhere tangent to the Liouville vector field, while $D_\eta \cap \overline{X}_a$ inside the handle is identified with
$$0_{D^{k+1}}  \times \left(\{x_1\ge 0,x_2=\ldots=x_{n-k}=y_1=\ldots=y_{n-k}=0\} \cap D^{2(n-k)}\right).$$
The intersection of $D_\eta$ with the contact boundary $\partial \overline{X}_a \subset \R \times Y$ can, moreover, be assumed to be an isotropic $k+1$-sphere. 

We may symplectically identify a neighbourhood of $D_\eta \setminus X_a$ with a Weinstein $k+2$-handle, under which $D_\eta \setminus C_\eta$ is the isotropic core-disc of the handle. Using $\overline{X}_b \supset \overline{X}_a$ to denote the union of $\overline{X}_a$ together with a neighbourhood of $D_\eta$, we have produced a symplectic identification of the neighbourhood $\overline{X}_b \subset \R \times Y$ with the result of attaching Weinstein handles of indices $k+1$ and $k+2$ on $\partial (-\infty,0]\times Y$ in cancelling position. In particular, $\partial \overline{X}_b$ is contactomorphic to $Y$.

Finally, after a compactly supported deformation of the Liouville vector field $\partial_t$ of $(\R \times Y,d(e^t\alpha))$ to a complete Liouville vector field $\zeta$ without critical points for the same symplectic form, we may assume that $L_\eta \setminus X_b$ is tangent to $\zeta$ in $\R \times Y \setminus \overline{X}_b$, and that $\zeta$ coincides with the Liouville vector field induced by the Weinstein handle-decomposition of $\overline{X}_b$ near its boundary $\partial \overline{X}_b$. This gives us the sought Weinstein structure on $\R \times Y$ and thus finishes the proof of properties (1)--(4).

In the special case when $n=1$ and $k=0$, the handles in cancelling position together with the Legendrian $\Lambda_\eta$ consists of:
\begin{itemize}
\item A Weinstein one-handle attached at the 0-sphere $\partial \eta$;
\item A Weinstein two-handle attached along a knot $S_\eta$ that coincides with $\eta$ outside of the above $0$-handle; while
\item The Legendrian $\Lambda_\eta$ coincides with $\Lambda$ outside of the one-handle and coincides with a cusp-connected sum of two strands inside the one-handle.
\end{itemize}

In Darboux balls that contain $\partial \eta$ the Legendrian $\Lambda_\eta$ and attaching $0$-spheres and $1$-spheres $S_\eta$ can be described as shown on the bottom in Figure \ref{fig:darbouxball-before}.

Note that this Kirby diagram is not in standard form, since there are strands entering the one-handle from both the left and right. This we correct by the contact isotopy described in K--IV move applied to $\Lambda_\eta$; see Figure \ref{fig:m4}. This contact isotopy is supported inside the one-handle, fixes the attaching sphere $S_\eta$ of the cancelling two-handle, and places the strands that correspond to $\Lambda_\eta$ on the same side as the strand corresponding to $S_\eta$. This produces the configuration shown on the bottom of Figure \ref{fig:darbouxball}, as sought.
\end{proof}

\subsection{Ambient surgery, cusp-connected sum, and isotopies.}
\label{sec:cuspconn}

Since the main focus here is Legendrian ambient $0$-surgery on Legendrian knots in $\R^3$ we will give some additional details in this case.

When $\eta$ connects two cusp edges in the front projection of the Legendrian link $\Lambda$, the Legendrian ambient surgery can be performed in the following manner.

\emph{Step (1):} Choose a standard contact jet-neighbourhood $J^1\eta$ of the surgery arc in which the Legendrian $\Lambda$ consists of two facing cusp-edges, whose singularities are connected by the zero-section $\eta \subset J^1\eta$ that corresponds to the arc $\eta$.

\emph{Step (2):} Perform a Legendrian isotopy of $\Lambda$ whose support is compact inside the above neighbourhood $J^1\eta$, so that the cusp-edges become arbitrarily close to each other; denote the resulting Legendrian by $\Lambda'$. We perform this isotopy in an arbitrarily small neighborhood of $\eta$, while fixing the arc $\eta$ set-wise, and so that it induces an isotopy of front-diagrams; See the top and middle front in Figure \ref{fig:surgery3}.

\emph{Step (3):} Perform a standard cusp-connected sum at the two facing cusp-edges of $\Lambda'$ that now are joined by a very small segment of $\eta$, and can be assumed to be arbitrarily close. The result is shown in the bottom of Figure \ref{fig:surgery3}.

Moreover, the ambient Legendrian surgery gives rise to a \emph{Lagrangian standard handle-attachment cobordism} that consists of the trace of the aforementioned isotopy, followed by a Lagrangian standard one-handle. 

The isotopy performed in Step (2) is easy to describe also in the front of $\R^3$; we express this in a lemma. \color{black} The techniques, and moves shown in the figures, are covered by the generalised Reidemeister moves for Legendrian graphs developed in \cite{ODonnolPavelescu2012}, which also have been used by Sabloff, Vela-Vick, Wong, and Wu in \cite{SabloffVelaVickWongWu}.
\color{black}
\begin{lem}
\label{step2front}
For a generic choice of surgery arc $\eta$ and Legendrian $\Lambda$, the result $\Lambda_\eta$ after Legendrian ambient surgery can be obtained by first applying the Legendrian isotopy of $\Lambda$ in Step (2) by performing consecutive moves of type (S1) and (S2) shown in Figure \ref{fig:surgarc} to yield $\Lambda'$, and then performing a standard cusp-connected sum on the resulting $\Lambda'$.
\end{lem}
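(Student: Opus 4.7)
The plan is to realise the Legendrian isotopy of Step (2) as a finite concatenation of local front moves, one for each singularity of the front projection of $\eta$. First I would place the pair $(\Lambda,\eta)$ in general position with respect to the front projection $\pi_{xz}\co \R^3 \to \R^2_{xz}$. Genericity ensures that the front of $\eta$ is a smooth immersed arc with only isolated cusps (i.e.~vertical tangencies where the $x$-derivative vanishes), that the front of $\eta \cup \Lambda$ has only transverse double points, that none of these singularities occur at the two cusp-endpoints $\partial \eta \subset \Lambda$, and that they occur at pairwise distinct values of a parameter $s \in [0,1]$ on $\eta$.

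Next, I would use the standard contact neighbourhood theorem to identify a tubular neighbourhood of $\eta$ with $J^1\eta$, under which the two cusp-endpoints of $\Lambda$ correspond to the standard local model of two facing cusp-edges joined by the zero-section. In this standard model the isotopy of Step (2) is simply the monotone shrinking of the two cusps toward each other along $\eta$, which is manifestly a Legendrian isotopy supported arbitrarily close to $\eta$. What remains is to describe how this intrinsic isotopy looks in the ambient front projection of $\R^3$.

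The key observation is that the combinatorial type of the ambient front of $\Lambda \cup \eta$ near the moving cusp changes only at those isolated parameter values $s \in [0,1]$ at which the cusp passes through a singularity of the front of $\eta$. By the general position assumption, each such critical event is of exactly one of two types: either the cusp passes through a cusp (vertical tangency) of $\pi_{xz}(\eta)$, producing the local move (S1); or the cusp crosses transversely a strand of $\pi_{xz}(\Lambda)$ or another branch of $\pi_{xz}(\eta)$, producing the local move (S2). Between consecutive critical values the front evolves by an ambient planar isotopy and contributes no combinatorial change.

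Since there are only finitely many critical values, the isotopy of Step (2) decomposes as a finite sequence of moves of types (S1) and (S2), yielding the Legendrian $\Lambda'$ whose two facing cusps are arbitrarily close. Performing the standard cusp-connected sum on $\Lambda'$ then produces $\Lambda_\eta$ by definition, completing the proof. The main subtlety I expect is checking, via the local Legendrian model, that each elementary event around a singularity of $\pi_{xz}(\eta)$ indeed reproduces precisely one of the two front moves depicted in Figure~\ref{fig:surgarc}, rather than some more complicated composite move; this is where both the genericity of $(\Lambda,\eta)$ and the standard neighbourhood identification are essential.
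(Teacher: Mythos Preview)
Your proposal is correct and is precisely the natural elaboration of what the paper leaves implicit: in the paper the lemma is stated without proof, immediately after the description of Steps (1)--(3) and Figure~\ref{fig:surgery3}, and is treated as evident from the local model. Your argument---placing $(\Lambda,\eta)$ in generic position so that the front of $\eta$ has finitely many isolated cusps and transverse crossings, then tracking the moving cusp of $\Lambda$ along $\eta$ and noting that the combinatorial type of the front changes only at these finitely many events---is exactly the intended justification.

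One small point worth tightening: when you list the second type of event as the cusp ``crossing transversely a strand of $\pi_{xz}(\Lambda)$ or another branch of $\pi_{xz}(\eta)$'', note that what matters for the front of $\Lambda'$ is the crossing of the advancing cusp (which follows $\eta$) with a strand of $\Lambda$; self-crossings of $\eta$ are not directly visible in the isotopy of $\Lambda$ itself. Also, you may have the labels (S1) and (S2) interchanged relative to Figure~\ref{fig:surgarc}, but this is cosmetic. Otherwise your write-up supplies exactly the details the paper omits.
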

\begin{figure}
\labellist
\pinlabel $\color{red}\eta$ at 100 122
\pinlabel $\color{red}\eta$ at 100 36
\pinlabel $\color{red}\eta$ at 140 94
\pinlabel $\color{red}\eta$ at 140 8
\pinlabel $(S1)$ at 44 71
\pinlabel $(S2)$ at 171 71
\endlabellist
\includegraphics[scale=1.25]{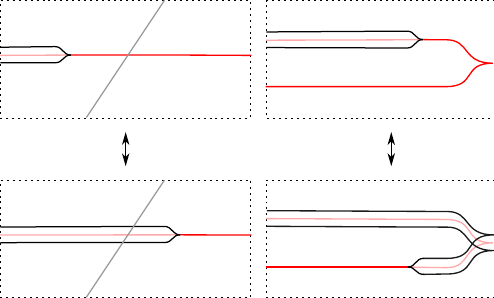}
\caption{The Legendrian isotopy of $\Lambda$ following the surgery arc. \color{black} These moves were also established by Sabloff, Vela-Vick, Wong and Wu, see e.g.~\cite[Figure 9]{SabloffVelaVickWongWu} for (S2).}
\label{fig:surgarc}
\end{figure}

\section{Spinning surgery discs and attaching spheres (Proof of Theorem \ref{spun_iso})}
\label{proof_Theorem_spun_iso}

In this section we provide two different, but related, strategies that produce Legendrian isotopies after ``spinning'' a Legendrian $\Lambda \subset Y^{2m+1}$ or, more generally, after passing to the product Legendrian
$$ \Lambda \times 0_{S^n} \subset Y^{2m+1} \times T^*S^n.$$
These strategies will  lead to the proof of Theorem \ref{spun_iso}.
\color{black}
We will mainly be interested in the case then $Y^{2m+1}=\R^3$. We begin by showing some results that can be of general use.

\subsection{Generalities on formal Legendrian isotopies in the complement of a Legendrian}

\begin{prop}
\label{charact_rel}
\color{black}Let $\Lambda\subset Y^3$ denote a fixed Legendrian link. 
Further, let $S_0,S_1 \subset Y^3 \setminus \Lambda$ be two Legendrians in the complement of $\Lambda$, where each component of $S_i$ is either closed or compact with boundary. If there exists a Legendrian isotopy $S_t \subset Y$ which takes $S_0$ to $S_1$ (the isotopy is allowed to cross $\Lambda$), which is the identity near the boundary $\partial S_0 \cup \partial S_1$, then
\color{black}
it follows that:
\begin{itemize}
\item When $n=2l+1>0$ is odd, the two products $S_i \times 0_{S^n}$, $i=0,1$, are (compactly supported) formally Legendrian isotopic when considered inside the contact manifold $(Y\times T^*S^n) \setminus (\Lambda \times 0_{S^n})$.
\item When $n=2l>0$ is even, the same is true under the additional assumption that any two components of the immersed trace cobordism
$$\{ (t,y); \:\: y \in S_t \cup \Lambda \} \subset [0,1] \times Y$$
have vanishing algebraic intersection number.
\end{itemize}
\end{prop}
\begin{proof}
In the case when $n$ is odd, we can find an explicitly defined perturbation that removes all intersections of $S_t\times 0_{S^n}$ and $\Lambda \times 0_{S^n}$. More precisely, all these intersections are clean with intersection locus of the form $\{\mathrm{pt}\} \times 0_{S^n}$. By the addition of a small non-vanishing section of $T^*S^n \to S^n$ along the intersection loci $\{\mathrm{pt}\} \times 0_{S^n} \subset S_t \times 0_{S^n}$, we obtain a formal Legendrian isotopy from $S_0 \times 0_{S^n}$ to $S_1 \times 0_{S^n}$ that lives inside the complement of $\Lambda \times 0_{S^n}$.

In the case when $n$ is even, we argue as follows. The Legendrian isotopy $S_t \times 0_{S^n}$ intersects $\Lambda \times 0_{S^n}$ cleanly in submanifolds of the form $\{\mathrm{pt}\} \times 0_{S^n}$. After a generic perturbation of the isotopy, we obtain a situation where the corresponding intersections of the immersed trace cobordism
$$\mathcal{T} \coloneqq \{ (t,y); \:\: y \in S_t \cup \Lambda \}  \times 0_{S^n} \subset \R \times Y \times T^*S^n$$
are generic double-points where, moreover, the algebraic intersection number of any two fixed components vanishes.

We then deform the isotopy $S_t \times 0_{S^n}$ through isotopies to one that avoids $\Lambda \times 0_S^n$ by the following argument using the existence of certain Whitney discs:

Create Whitney disc by connecting two double points of opposite signs with arcs in both intersecting sheets of the immersed trace cobordism $\mathcal{T}$; this forms a loop in $[0,1]_t\times Y \times T^*S^n$. We may assume that each arc intersects each constant $t$-slice $\{t\} \times Y \times T^*S^n$ in at most one point. Since the Legendrians have codimension strictly greater than two in the contact manifold $Y \times T^*S^n$, one can find a smooth family of paths in each slice $\{t\} \times Y \times T^*S^n$ that connect the two unique points on the arcs in the sheets of $\mathcal{T}$. The high codimension makes it possible to assume that these family of paths trace out an embedded two-dimensional disc with boundary on $\mathcal{T}$ and interior disjoint from $\mathcal{T}$. These Whitney discs can be used to deform $S_t \times 0_{S^n}$ in the required manner, to remove any intersection with $\Lambda \times 0_{S^n}$.

Finally, since the initial isotopy was a Legendrian isotopy, the bundle maps can be extended over the family of deformed isotopies that we just constructed, in order to yield the sought formal Legendrian isotopy.
\end{proof}

Using the fact that products of stabilised knots become loose \cite[Proposition 1.13]{DimitroglouRizellGolovko14}, combined with Murphy's $h$-principle for loose Legendrian embeddings \cite{Murphy}, we get:

\begin{cor}
\label{cor:legiso1}
If the two Legendrians $S_0,S_1 \subset Y^3 \setminus \Lambda$ satisfy the assumptions of Proposition \ref{charact_rel} and, moreover, are stabilised when considered inside the contact three-manifold $Y^3 \setminus \Lambda$, then the two Legendrians $S_i \times 0_{S^n}$, $i=0,1,$ are Legendrian isotopic when considered inside the contact manifold  $(Y\times T^*S^n) \setminus (\Lambda \times 0_{S^n})$.
\end{cor}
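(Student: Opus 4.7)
The plan is to combine three inputs: the formal Legendrian isotopy produced by Proposition \ref{charact_rel}, the fact that products of stabilised Legendrian knots with the zero-section $0_{S^n}$ are loose, and Murphy's $h$-principle for loose Legendrian embeddings. The statement then follows essentially by invoking each one in turn, taking care that the entire construction can be carried out in the complement of $\Lambda \times 0_{S^n}$.

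More precisely, the first step is to apply Proposition \ref{charact_rel} to the given isotopy between $S_0$ and $S_1$, which produces a formal Legendrian isotopy from $S_0 \times 0_{S^n}$ to $S_1 \times 0_{S^n}$ inside the contact manifold $(Y \times T^*S^n) \setminus (\Lambda \times 0_{S^n})$. Note that this formal isotopy is what is actually available to us --- the genuine geometric isotopy $S_t \times 0_{S^n}$ generally crashes into $\Lambda \times 0_{S^n}$, and the proposition's construction only removes those intersections at the formal level.

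The second step is to promote formal to genuine using looseness. By \cite[Proposition 1.13]{DimitroglouRizellGolovko14}, taking the product of a stabilised Legendrian knot in a contact three-manifold with the zero-section $0_{S^n} \subset T^*S^n$ produces a loose Legendrian submanifold of the resulting contact manifold, and the loose chart can be taken inside any chosen open neighbourhood that contains a stabilisation of the three-dimensional Legendrian. In our situation, since $S_0$ and $S_1$ are stabilised \emph{in the complement} $Y \setminus \Lambda$, the stabilisation, and hence the loose chart of the corresponding product, can be arranged to lie inside $(Y \times T^*S^n) \setminus (\Lambda \times 0_{S^n})$. Consequently, both $S_0 \times 0_{S^n}$ and $S_1 \times 0_{S^n}$ are loose Legendrians in this open contact manifold.

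The third step is to invoke Murphy's $h$-principle \cite{Murphy}, which asserts that two loose Legendrian embeddings which are formally Legendrian isotopic are actually Legendrian isotopic, where the isotopy can be arranged to take place in any open neighbourhood of the formal isotopy. Applying this in the open contact manifold $(Y\times T^*S^n)\setminus(\Lambda \times 0_{S^n})$ produces the desired genuine Legendrian isotopy. The main subtlety to keep track of is precisely this relative/open formulation: one must verify that Murphy's $h$-principle applies in a (non-compact) open contact manifold, and that the loose chart persists throughout the formal isotopy; both points are standard for the loose $h$-principle, since the result is local in nature and the presence of a loose chart is an open condition that is preserved under the formal deformation. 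Once this is checked, the conclusion is immediate.
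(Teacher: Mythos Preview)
Your proof is correct and follows essentially the same approach as the paper: the paper's justification is the single sentence preceding the corollary, which invokes exactly the two ingredients you use --- \cite[Proposition 1.13]{DimitroglouRizellGolovko14} for looseness of the products and Murphy's $h$-principle \cite{Murphy} --- on top of the formal isotopy supplied by Proposition~\ref{charact_rel}. One minor remark: Murphy's theorem does not require the loose chart to ``persist throughout the formal isotopy''; it only needs both endpoints to be loose and a formal Legendrian isotopy between them, so that part of your discussion is unnecessary (though harmless).
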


\begin{cor}
\label{cor:legiso2}
If the two Legendrians $S_0,S_1 \subset Y^3 \setminus \Lambda$ satisfy the assumptions of Proposition \ref{charact_rel} and, moreover, are stabilised when considered inside the contact three-manifold $Y^3 \setminus \Lambda$, then the two Legendrians $\Lambda \times 0_{S^n} \subset Y_{S_i} \times T^*S^n$, $i=0,1$, are \color{black} compactly supported \color{black} contactomorphic. When $Y_{S_i} \cong \R^3$, then the induced Legendrians inside $\R^3 \times T^*S^n$ are Legendrian isotopic.\end{cor}

\color{black} The existence of a Legendrian isotopy in $\R^3
\times T^*S^n$ is a direct consequence of the existence of a compactly supported contact contactomorphism. Namely, since any compact subset in $\R^3$ can be displaced from itself by a contactomorphism, any compactly supported contactomorphism of $\R^3 \times T^*M$ preserves the Legendrian isotopy class of any compact Legendrian; see Lemma \ref{lem:isotopy} below.\color{black}

We are now ready to prove Theorem~\ref{spun_iso}, which follows from either one of the following two constructions.

\subsection{First strategy: Spinning the ambient Legendrian surgery discs}
\label{first_str}

We begin with the following basic result.

\begin{lem}
\label{lem:isosurgery}
Consider a Legendrian $\Lambda \subset Y^3$ together with two surgery arcs $\eta_i \subset Y$ that have boundary on $\Lambda$ and agree near the boundary. If the two Legendrians
$$\eta_i \times 0_{S^n} \subset Y^3 \times T^*S^n, \:\:i=0,1,$$
are isotopic by a contact isotopy that fixes a neighbourhood of $\Lambda \times 0_{S^n}$, then the two spuns $\Lambda_{\eta_i} \times 0_{S^n}$, $i=0,1,$ are Legendrian isotopic.
\end{lem}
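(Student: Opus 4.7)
The plan is to exploit the fact that the construction of $\Lambda_\eta$ is entirely local in a standard contact neighbourhood of the surgery arc $\eta$, combined with naturality of this local model under contactomorphisms. As recorded in the lemma preceding Proposition \ref{prop:surgeries}, the resulting Legendrian and its attached Lagrangian handle depend only on $\Lambda\cup \eta$ together with the framings, up to contact isotopy.

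First I would verify that the product Legendrian $\Lambda_{\eta_i}\times 0_{S^n}$ is itself a local modification of $\Lambda\times 0_{S^n}$. Concretely, take the standard contact jet-neighbourhood $J^1\eta_i$ of $\eta_i\subset Y^3$ used in Step (1) of Section \ref{sec:cuspconn}, and thicken it by a Weinstein neighbourhood of $0_{S^n}\subset T^*S^n$. The result is a product contact neighbourhood $\mathcal{N}_i$ of $\eta_i\times 0_{S^n}$ in $Y^3\times T^*S^n$ inside which $\Lambda_{\eta_i}\times 0_{S^n}$ is obtained from $(\Lambda\times 0_{S^n})\cap\mathcal{N}_i$ by taking the product with $0_{S^n}$ of the local cusp-connected sum construction. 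Outside $\mathcal{N}_i$ the Legendrian $\Lambda_{\eta_i}\times 0_{S^n}$ coincides with $\Lambda\times 0_{S^n}$.

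Next, using the contact isotopy extension theorem, I would extend the given contact isotopy of $\eta_i\times 0_{S^n}$ to a compactly supported ambient contact isotopy $\Psi_t$ of $Y^3\times T^*S^n$. Since the input isotopy fixes a neighbourhood of $\Lambda\times 0_{S^n}$, the extension can be arranged to do the same. By the standard contact neighbourhood theorem for isotropic submanifolds with boundary on a Legendrian, applied relative to the fixed neighbourhood of $\Lambda\times 0_{S^n}$, I would then homotope $\Psi_1$ so that it identifies the standard neighbourhoods $\mathcal{N}_0$ and $\mathcal{N}_1$ via the canonical identification of the local models. Because inside each $\mathcal{N}_i$ the Legendrian $\Lambda_{\eta_i}\times 0_{S^n}$ is defined by the same product formula, it follows that $\Psi_1(\Lambda_{\eta_0}\times 0_{S^n})=\Lambda_{\eta_1}\times 0_{S^n}$, and the isotopy $\Psi_t(\Lambda_{\eta_0}\times 0_{S^n})$ is the sought Legendrian isotopy.

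The main technical point is the compatibility of framings when identifying $\mathcal{N}_0$ with $\mathcal{N}_1$, namely the framing of $\partial\eta_i\subset \Lambda$ and the trivialisation of the conformal symplectic normal bundle of $\eta_i\times 0_{S^n}$. These are pinned down near the boundary by the hypothesis that the isotopy fixes a neighbourhood of $\Lambda\times 0_{S^n}$ and that the $\eta_i$ agree near $\partial$. Since the space of trivialisations on a disc extending a prescribed boundary trivialisation is contractible, the extended isotopy $\Psi_t$ can be further homotoped, rel.\ $\Lambda\times 0_{S^n}$, so that the global framings match, which is precisely the hypothesis needed to invoke the naturality of the surgery construction.
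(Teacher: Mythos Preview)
Your argument is correct and follows essentially the same route as the paper's proof: both exploit that the surgery is performed in a standard jet-neighbourhood of the arc, that the product $J^1\eta_i \times T^*S^n$ is canonically a standard neighbourhood $J^1(\eta_i\times S^n)$ of the product Legendrian, and that uniqueness of such neighbourhoods together with the hypothesised contact isotopy carries one surgered Legendrian to the other. Your version is more explicit about the isotopy extension and the framing discussion; note, however, that since $\eta_i\times 0_{S^n}$ is Legendrian (not strictly isotropic) its conformal symplectic normal bundle is zero-dimensional, so that part of your last paragraph is vacuous here and the argument simplifies accordingly.
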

\begin{proof}
The ambient Legendrian surgery is constructed in a small standard neighbourhood $J^1\eta_i$ of $\eta_i$ by implementing the Legendrian shown in Figure \ref{fig:surgery3}. Since standard neighbourhoods are unique up to contact isotopy, the construction does not depend on the choice of neighbourhood.

Since there is a canonical identification
$$(J^1\eta_i \times T^*S^n,dz+p\,dq+\lambda_{S^n})=J^1(\eta_i \times S^n),dz+p\,dq+\lambda_{S^n})$$
of contact manifolds, a product of a contact standard neighbourhood of $\eta_i$ with a Weinstein neighbourhood of $0_S^n$ is a standard neighbourhood of the Legendrian $\eta_i \times 0_{S^n}$. Any two contact neighbourhoods of $\eta_i \times 0_{S^n}$, $i=0,1,$ are contact isotopic; here we use the uniqueness of contact neighbourhood together with the assumption that these two Legendrians are isotopic.

It follows that the product Legendrians are isotopic as sought.
\end{proof}

Now consider the Legendrian link $\Lambda \subset \mathbb{R}^3$ consisting of two unlinked standard unknots, with the surgery arc $\eta_m \subset \mathbb{R}^3$  as shown in Figure \ref{fig:handle_decomposition_odd} or \ref{fig:handle_decomposition_even}. In particular, the ambient Legendrian surgery produces the Legendrian knot $\Lambda_{\eta_m} = \Lambda_m$. Then consider the Legendrian surgery arc $\eta^0_m$ for the same Legendrian link $\Lambda$ depicted in Figure \ref{fig:unknotodd}, for which $\Lambda_{\eta^0_m}$ clearly is Legendrian isotopic to the standard unknot $U$. Note that $\eta^0_m$ and $\eta_m$ both are stabilised and Legendrian isotopic in $\mathbb{R}^3$ relative boundary. However, any isotopy must clearly intersect $\Lambda$.

In the case when $n$ is odd, we can apply Corollary \ref{cor:legiso1} directly to get the sought isotopy between the spun of $\eta_m$ and the spun of $\eta^0_m$. Lemma \ref{lem:isosurgery} then gives the sought Legendrian isotopy between the spuns.

In the case when $n$ is even, we can use the contact isotopy depicted in Figure \ref{fig:move_even} in order to verify the existence of a Legendrian isotopy that takes the arc $\eta_m$ to $\eta^0_m$, and for which the assumptions of Proposition \ref{charact_rel} are met. Indeed, the obvious isotopy $\eta_t$ described in Figure \ref{fig:move_even2} intersects $\Lambda$ twice, but with opposite signs of the corresponding double points on the induced trace cobordism. Again, Corollary \ref{cor:legiso1} shows the claim.

\begin{figure}[h!]
\begin{center}
\labellist
\pinlabel $\Lambda_l$ at 35 205
\pinlabel $\Lambda_r$ at 165 175
\pinlabel $\Lambda_l$ at 35 150
\pinlabel $\Lambda_r$ at 165 120
\pinlabel $\Lambda_l$ at 45 98
\pinlabel $\Lambda_r$ at 165 65
\pinlabel $\Lambda_l$ at 250 15
\pinlabel $\Lambda_r$ at 165 5
\endlabellist
\includegraphics[scale=1]{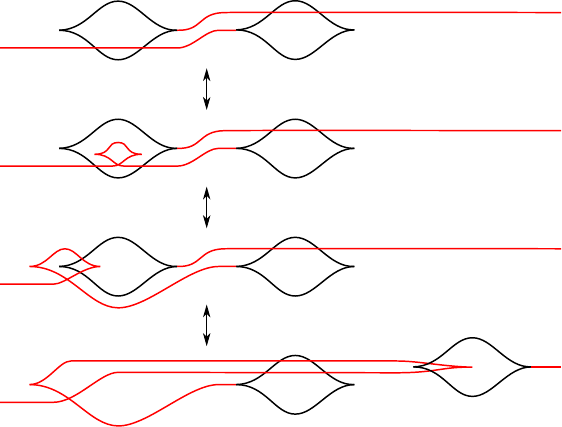}
\caption{The configurations showed on the top and second from bottom are contact isotopic.}
\label{fig:move_even}
\end{center}
\end{figure}
\begin{figure}[h!]
\begin{center}
\labellist
\pinlabel $\Lambda_l$ at 250 80
\pinlabel $\Lambda_r$ at 165 70
\pinlabel $\Lambda_l$ at 250 10
\pinlabel $\Lambda_r$ at 80 0
\endlabellist
\includegraphics[scale=1]{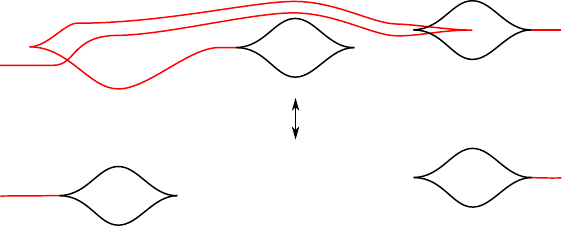}
\caption{The bottom configuration in Figure \ref{fig:move_even} can be deformed to the top figuration shown here by a Legendrian isotopy of the red arc that intersects $\Lambda_r$, i.e.~unlinking the red arc from $\Lambda_r$. The corresponding immersed trace cobordism has two double points of opposite signs. Finally, after this unlinking, one can apply a global contact isotopy to make both the surgery arcs and Legendrian unknots unlinked as shown at the bottom.}
\label{fig:move_even2}
\end{center}
\end{figure}

\subsection{Second strategy: Spinning the pair of cancelling Weinstein handles} 
\label{sec_str}

We apply Proposition \ref{prop:surgeries}  to the standard Lagrangian handle-attachment cobordism whose negative  boundary is the two unlinked unknots $\Lambda$ and positive boundary is the Legendrian $\Lambda_m$, i.e.~the result $\Lambda_{\eta_m}$ of an ambient Legendrian surgery on $\Lambda$ along the isotropic arc $\eta_m$. The result is a Weinstein structure on $\R \times \R^3$ consisting of a Weinstein one and two-handle in cancelling position for which the Lagrangian handle-attachment cobordism is tangent to the Liouville flow.

In particular Proposition \ref{prop:surgeries} gives us a Legendrian $\tilde{\Lambda}_{\eta_{\color{black}m}} \subset \left(\R^3\right)_{sc}$ where the latter contact manifold is the result of a Weinstein $0$-surgery on $\R^3$, and such that $\tilde{\Lambda}_{\eta_m}$ becomes $\Lambda_{\eta_m}$ after a Weinstein 1-surgery on a knot $S \subset \left(\R^3\right)_{sc} \setminus \tilde{\Lambda}_{\eta_m}$ that cancels the $0$-surgery. The attaching $0$-sphere and cancelling $1$-sphere, as well as the knot $\tilde{\Lambda}_{\eta_m}$, are obtained from $\Lambda$ and $\eta_m$ by the recipe shown in Figure \ref{fig:darbouxball}; \color{black}we refer to Figure \ref{fig:handle_decomposition_even} for $\eta_m$.\color{black}
 
There exists a stabilised Legendrian knot $S_0 \subset \left(\R^3\right)_{sc} \setminus \Lambda_{\eta_m}$ which also is in cancelling position, but for which $\Lambda_{\eta_m}$ becomes the standard unknot $U$ after cancelling \color{black} the 1-handle with the 2-handle attached along $S_0$. This attaching sphere can be constructed analogously to $S$, but using the surgery arc $\eta^0_m$ shown in Figure \ref{fig:unknotodd} instead of $\eta_m$ shown in Figure \ref{fig:handle_decomposition_even}. Here we again \color{black} follow the recipe in Figure \ref{fig:darbouxball}.

\color{black}
\subsubsection{Digresson: Generalised Weinstein handle decompositions}

The product of two Weinstein manifolds are naturally equipped with a handle-body structure where the handles of the product corresponds to products of handles. In particular, the number of handles for the induced handle-decomposition is the product of the number of handles of each factor. In the case when one of the factors is the cotangent bundle $T^*M$ of a closed smooth manifold there is a more economic decomposition, if one allows a generalisation of the Weinstein handle decomposition.

First recall that the standard Weinstein $k$-handle, as introduced by Weinstein in \cite{Weinstein}, in dimension $2n$ with $n \ge k$ is given by $T^*D^k \times \C^{n-k}$ with the standard symplectic structure. This handle can be attached to a Liouville domain by, roughly speaking, gluing the $k-1$-sphere
$$\partial 0_{D^k} \times \{0\} \subset T^*D^k \times \C^{n-k}$$
to a framed isotropic $k-1$ sphere in the boundary of a Liouville domain. The latter framed isotropic embedding is called the \textbf{isotropic attaching sphere}. A Weinstein manifold can be characterised as a Liouville domain that admits a handle-body decomposition consisting of standard Weinstein handles.

A \textbf{generalised Weinstein $k$-handle} is given by $T^*N^k \times \C^{n-k}$ where $D^k$ has been replaced by a general compact and smooth $k$-dimensional manifold $N^k$ with non-empty boundary. The generalised handles are similarly attached to a Liouville domain by gluing
$$\partial 0_N \times \{0\} \subset T^*M^k \times \C^{n-k}$$
along a framed isotropic embedding of $\partial N$ in the the contact boundary of the Liouville domain. We refer to the latter framed isotropic embedding as the \textbf{isotropic attaching manifold}; see \cite{Dimitroglou:Refined} and \cite{Husin} for more details on particular generalised handles. Those Liouville manifolds that admit a handle-body structure consisting of generalised Weinstein handles are obviously also Weinstien manifolds. By a \textbf{generalised Weinstein handle body} we mean a decomposition of a Weinstein manifold into generalised Weinstein handles.

A particular set of examples that admit natural generalised Weinstein handle decompositions is the product $X \times T^*M^m$ of a Weinstein manifold $X$ and a cotangent bundle $T^*M$ of a closed smooth $m$-dimensional manifold. Namely, this product has a simple decomposition into generalised Weinstein handles induced by any Weinstein handle decomposition of $X$. Namely, each Weinstein handle $T^*D^k \times \C^{n-k}$ in the handle-decomposition of $X$ gives rise to a generalised Weinstein handle $T^*(D^k \times M) \times \C^{n-k}$ in the generalised Weinstein handle-body decomposition of $X \times T^*M$. The topology of each handle in this case is thus $N^{k+m}=D^k \times M^m$.\color{black}

\subsubsection{A generalised Weinstein handle decomposition of the symplectisation $\R \times (\R^3 \times T^*S^n)$.}

\color{black} In our situation we are interested in taking the product \color{black} with $T^*S^n$. This results in a generalised Weinstein handle decomposition of \color{black}the symplectisation $\R \times (\R^3 \times T^*S^n)$ obtained in the following manner. First, consider a Weinstein 1-handle attached on $\R^3$ and take the product with $T^*S^n$. This leads to a product Weinstein manifold with boundary  $\left(\R^3\right)_{sc} \times T^*S^n.$ The latter contact manifold is obtained from $\R^3 \times T^*S^n$ by a pair of subcritical Weinstein surgeries along isotropic $n$-spheres. \color{black} Since the Weinstein 1-handle attachment that produces $\left(\R^3\right)_{sc}$ can be cancelled by a Weinstein 2-handle attached along $S \subset \left(\R^3\right)_{sc}$, taking a product of the latter handle and $T^*S^n$ then yields $\R^3 \times T^*S^n$ obtained from $\left(\R^3\right)_{sc} \times T^*S^n$ \color{black} by a generalised Weinstein surgery along the Legendrian $S \times 0_{S^n}$. Note that the latter generalised surgery corresponds to attaching a generalised Weinstein handle of the form $T^*(D^2 \times S^n)$. Note that such a handle-attachment, as well as the corresponding contact surgery, is completely determined by a choice of Legendrian embedding of $D^2 \times S^n$.

Finally we can apply Corollary \ref{cor:legiso2} to the two attaching manifolds $S_0 \times 0_{S^n}$ and $S \times 0_{S^n}$ to deduce the claim that $\Lambda_m \times 0_{S^n}$ and $U \times 0_{S^n}$ are Legendrian isotopic.

Alternatively, one can invoke the $h$-principle for flexible Lagrangian cobordisms \cite[Theorem 4.2]{EliashbergGanatraLzarev20} to deduce that there is a symplectomorphism of the symplectisation of $\R^3 \times T^*S^n$ that identifies the standard Lagrangian fillings of $U \times 0_{S^n}$ with the one of $\Lambda_m \times 0_{S^n}$ (given by the product of the one in Figure \ref{fig:filling of pretzel}). The Legendrian isotopy is then constructed by applying the below Lemma \ref{lem:isotopy} to the contactomorphism of the contact boundary that is induced by the symplectomorphism.

\begin{lem}
\label{lem:isotopy}
Assume that $(Y,\xi=\ker \alpha)$ is a contact manifold for which any proper compact subset $K \subsetneq Y$ admits a displacing contact isotopy $\phi_t \in \mathrm{Cont}(Y,\xi)$, by which we mean that $\phi_1(K) \cap K = \emptyset$. (This is e.g.~the case for $Y=\R^{2m+1}$ or the standard contact $S^{2m+1}$.) Then, if $\Lambda \subset (Y \times T^*S^n,\alpha+p\,d\theta)$ is a closed Legendrian, the image $\Psi(\Lambda)$ is Legendrian isotopic to $\Lambda$ whenever $\Psi \in \mathrm{Cont}(Y\times T^*S^n)$ is a contactomorphism with support contained in $K \times T^*S^n$ for some proper compact subset $K \subsetneq Y$. 
\end{lem}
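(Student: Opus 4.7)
The plan is to use a displacement trick: find a contact isotopy $\Phi_t$ of $Y\times T^*S^n$ which moves $\Lambda$ entirely off the support $K\times T^*S^n$ of $\Psi$, and then concatenate the family $\Phi_t(\Lambda)$ with the family $\Psi(\Phi_{1-s}(\Lambda))$ to build a Legendrian isotopy from $\Lambda$ to $\Psi(\Lambda)$.

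First I would lift the given displacing isotopy from $Y$ to the product. Let $\pi_Y\co Y\times T^*S^n\to Y$ denote the projection and set $K':=K\cup\pi_Y(\Lambda)$; this is a compact subset of $Y$ that in all cases of interest can be taken to be a proper subset. For $Y=\R^{2m+1}$ properness is automatic, and for $Y=S^{2m+1}$ one uses that $\pi_Y(\Lambda)$ is the smooth image of a compact $(m+n)$-manifold, which after a preliminary contact-isotopic perturbation can always be arranged to miss an open set. The hypothesis then supplies a contact isotopy $\phi_t$ of $Y$ with $\phi_1(K')\cap K'=\emptyset$. Writing $H_t$ for a contact Hamiltonian generating $\phi_t$ with respect to $\alpha$, the pulled-back Hamiltonian $\widetilde H_t:=H_t\circ\pi_Y$ on $(Y\times T^*S^n,\alpha+p\,d\theta)$ generates a contact isotopy $\Phi_t$ which by construction covers $\phi_t$, in the sense that $\pi_Y\circ\Phi_t=\phi_t\circ\pi_Y$.

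The key observation is then that $\pi_Y(\Phi_1(\Lambda))=\phi_1(\pi_Y(\Lambda))\subseteq \phi_1(K')$ is disjoint from $K\subseteq K'$, so that $\Phi_1(\Lambda)$ avoids the support $K\times T^*S^n$ of $\Psi$ entirely. Consequently $\Psi(\Phi_1(\Lambda))=\Phi_1(\Lambda)$, and the desired Legendrian isotopy from $\Lambda$ to $\Psi(\Lambda)$ is produced by the concatenation of $t\mapsto \Phi_t(\Lambda)$ on $[0,1]$ with $s\mapsto \Psi(\Phi_{1-s}(\Lambda))$ on $[0,1]$; each is a Legendrian isotopy because $\Phi_t$ and $\Psi$ are contactomorphisms.

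The only real subtle point I anticipate is the construction and use of the lift $\Phi_t$ itself: one must check that $\widetilde H_t = H_t\circ\pi_Y$ does define a genuine contact Hamiltonian on $(Y\times T^*S^n,\alpha+p\,d\theta)$ and that the resulting flow projects to $\phi_t$ under $\pi_Y$. This is a short computation in the contact Hamiltonian formalism on the product, but it is the only ingredient specific to the structure of $\alpha+p\,d\theta$, and it is precisely what allows the displacement argument to reduce to the assumed displaceability of proper compact subsets of $Y$.
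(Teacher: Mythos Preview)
Your proposal is correct and follows the same displacement strategy as the paper: lift a displacing contact isotopy of $Y$ to $Y\times T^*S^n$, use it to push $\Lambda$ off the support of $\Psi$, and concatenate. The paper writes the lift explicitly as $\Phi_t(y,\theta,p)=(\phi_t(y),\theta,p\,e^{g_t(y)})$ with $\phi_t^*\alpha=e^{g_t}\alpha$ rather than via a pulled-back contact Hamiltonian, but this is the same map; your enlargement $K'=K\cup\pi_Y(\Lambda)$ is a detail the paper's phrasing leaves implicit.
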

\begin{proof}
Any contact isotopy $\phi_t \in \mathrm{Cont}(Y,\xi)$ lifts to a contact isotopy
\begin{gather*}\Phi_t\in \mathrm{Cont}(Y \times T^*S^n),\\
\Phi_t(y,\theta,p)=(\phi_t(y),\theta,p\cdot e^{g_t(y)}) 
\end{gather*}
where $g_t \colon Y \to \R$ is determined by $\phi_t^*\alpha=e^{g_t}\alpha$. For any contactomorphism $\Psi \in \mathrm{Cont}(Y \times T^*S^n)$ supported in $K \times T^*S^n$ with $K \subsetneq Y$ proper and compact, we can find a contact isotopy of the above form for which $\Phi_1$ displaces the support of $\Psi$. The image $\Psi(\Lambda)$ is thus Legendrian isotopic to $\Psi \circ \Phi_1(\Lambda)=\Phi_1(\Lambda)$ which, in turn, is Legendrian isotopic to $\Phi_0(\Lambda)=\Lambda$.
\end{proof}

\begin{figure}[t]
\begin{center}
\labellist
\pinlabel $\color{red}\eta_m$ at 10 48
\endlabellist
\includegraphics[scale=0.75]{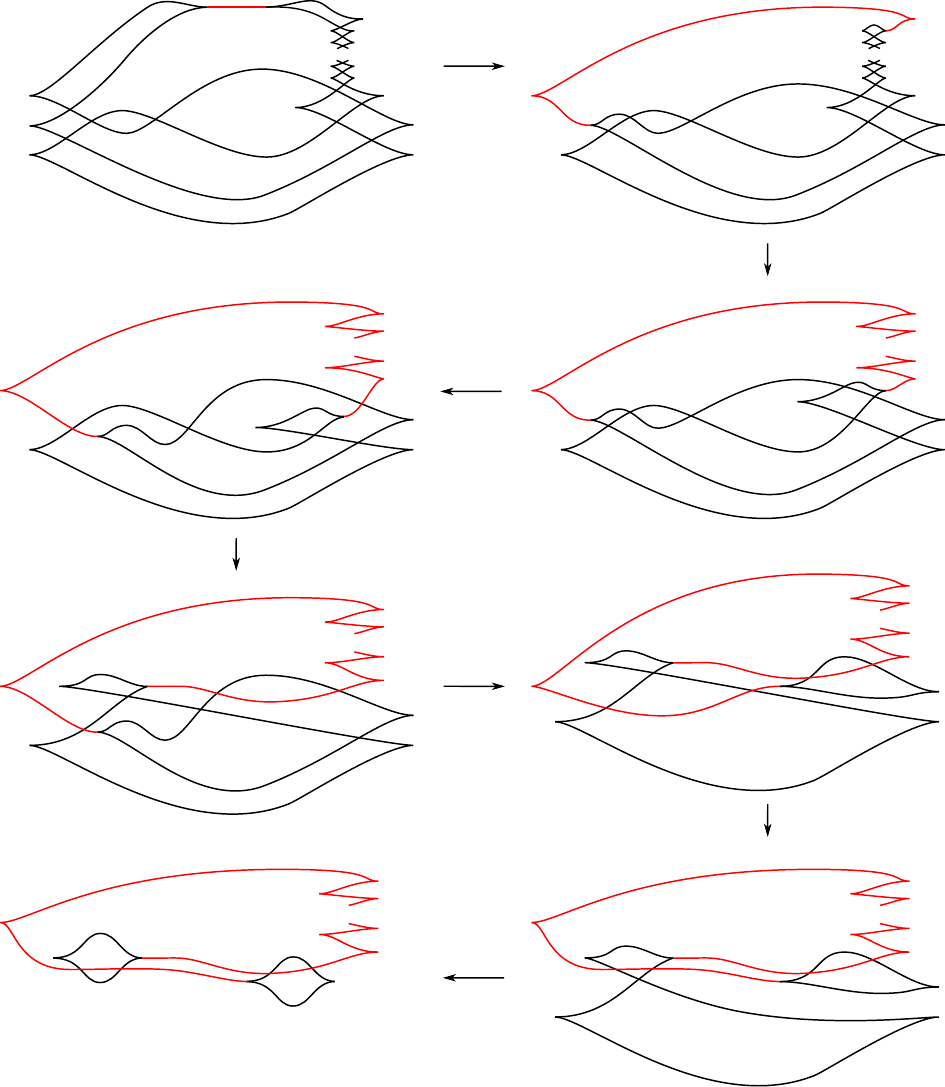}
\caption{$\Lambda_m$, $m>3$ odd, can be produced by an ambient Legendrian surgery along the surgery arc $\eta_m$ as shown.}
\label{fig:handle_decomposition_odd}
\end{center}
\end{figure}

\begin{figure}[t]
\label{mmeven}
\begin{center}
\labellist
\pinlabel $\color{red}\eta_m$ at 350 80
\endlabellist
\includegraphics[scale=0.75]{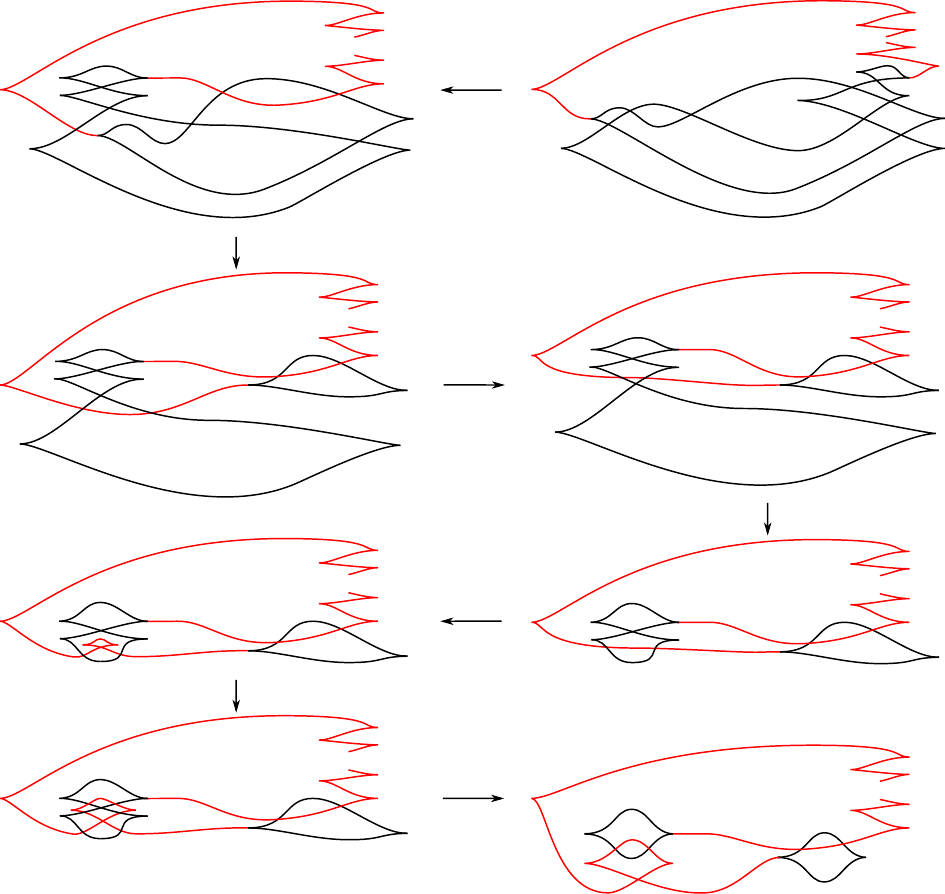}
\caption{$\Lambda_m$, $m>3$ even, can be produced by an ambient Legendrian surgery along the surgery arc $\eta_m$ as shown. See Figure \ref{fig:handle_decomposition_odd} for the first steps.}
\label{fig:handle_decomposition_even}
\end{center}
\end{figure}

\begin{figure}[t]
\begin{center}
\labellist
\pinlabel $\color{red}\eta^0_m$ at 4 10
\endlabellist
\includegraphics[scale=0.75]{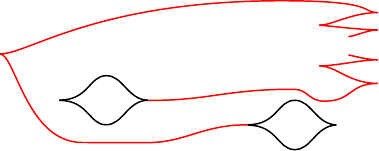}
\caption{Performing the Legendrian ambient surgery on two unlinked unknots, whose front projections moreover are disjoint, along an arc $\eta^0_m$ that intersects the two unknots precisely in their boundary points, produces the standard unknot $U$.}
\label{fig:unknotodd}
\end{center}
\end{figure}

\section{Constructing non-regular Lagrangian concordances}
The existence of non-regular exact Lagrangian cobordisms were shown by Eliashberg and Murphy in \cite{LagCaps} in high dimensions and Lin \cite{Lin} in dimension four. Namely, these constructions produce exact Lagrangian cobordisms with an empty positive end inside a symplectisation; elementary topological considerations imply that such a cobordism cannot be regular. From this perspective, the examples that we construct here are different, since they have a non-empty positive end and the simplest possible topology among Lagrangian cobordisms; they are concordances.

First we need the following generalisation of \cite[Theorem 3.2]{CornwellNgSivek16}.
\begin{thm}
\label{thm:regularconc}
Let $C \subset [-T,T] \times S^3$ be a smooth concordance that coincides with the cylinder over a knot $K_\pm$ near the boundary, and such that $[-T,T] \times S^3$ can be obtained by attaching $0,1,$ and $2$-handles onto a neighborhood of
$$(\{-T\} \times S^3) \cup C \subset [-T,T]\times S^3.$$
If $K_+$ is the unknot, then so is $K_-$. 
\end{thm}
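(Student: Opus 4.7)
The plan is to generalise the proof of \cite[Theorem 3.2]{CornwellNgSivek16} by isolating its topological content and reducing the conclusion to a gauge-theoretic contradiction based on the Kronheimer--Mrowka theorem on $\SU(2)$ representations of nontrivial knot groups. Set $W \coloneqq [-T,T] \times S^3 \setminus \nu(C)$, with boundary pieces $\partial_\pm W \coloneqq \{\pm T\} \times S^3 \setminus \nu(K_\pm)$ and a side boundary $[-T,T] \times \partial \nu(C)$. The first step is to observe that the hypothesised handle decomposition of $[-T,T] \times S^3$ descends to a handle decomposition of $W$ on a collar of $\partial_- W$ using only handles of indices $0$, $1$, and $2$: after removing $\nu(C)$, the starting neighbourhood $N$ of $(\{-T\}\times S^3) \cup C$ becomes a collar of $\partial_-W$, and the attached handles are by hypothesis disjoint from $\nu(C)$.

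Dualising this handle decomposition, $W$ is built from a collar of $\partial_+W$ by attaching handles of indices $2$, $3$, and $4$. Since handles of index $\geq 2$ add no new generators to $\pi_1$, the inclusion $\partial_+W \hookrightarrow W$ is $\pi_1$-surjective. Because $K_+$ is the unknot, $\pi_1(\partial_+W) \cong \Z$, generated by a meridian $\mu_+$ of $K_+$. Within $W$, both $\mu_+$ and the meridian $\mu_-$ of $K_-$ on $\partial_-W$ are isotopic to a common meridian $\mu$ of the cylinder $C$, via paths along $C$. A direct Mayer--Vietoris computation along $[-T,T]\times S^3 = W \cup \nu(C)$ gives $H_1(W) \cong \Z$ generated by $\mu$; combined with cyclicity, this yields $\pi_1(W) \cong \Z$, with the class of $\mu_-$ a generator.

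Assume for contradiction that $K_-$ is nontrivial. By the Kronheimer--Mrowka theorem on $\SU(2)$ representations of nontrivial knot groups, as invoked in \cite[Theorem 3.1]{CornwellNgSivek16}, there is a positive-dimensional family of irreducible representations $\rho \colon \pi_1(\partial_-W) \to \SU(2)$ sending $\mu_-$ to a prescribed non-central element. Following the strategy of \cite[Theorem 3.2]{CornwellNgSivek16}, one then shows that for some $\rho$ in this family, $\rho$ extends to a representation $\tilde\rho \colon \pi_1(W) \to \SU(2)$: extension across $0$- and $1$-handles is unobstructed (for $1$-handles one is free to choose the image of the new generator), and extension across each $2$-handle is a codimension condition on the character variety that can be satisfied within the Kronheimer--Mrowka family. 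Once $\tilde\rho$ is constructed, abelianness of $\pi_1(W) \cong \Z$ forces $\tilde\rho$, and hence $\rho$, to be reducible, contradicting irreducibility.

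The principal obstacle is the extension step. In the decomposable setting of \cite{CornwellNgSivek16} the attaching circles of the $2$-handles are explicit, coming from Legendrian saddle cobordisms, which makes the extension near-tautological. In our more general $0,1,2$-handle setting the circles are arbitrary, and the extension must be justified by a dimension count pitting the positive dimension of the Kronheimer--Mrowka character variety against the codimension imposed by the $2$-handle relations, using the fact that every attaching circle represents a meridional power in $\pi_1(W) \cong \Z$. Executing this dimension count carefully, and in particular showing it yields a nonempty intersection regardless of how the $2$-handles are attached, is the technical heart of the generalisation.
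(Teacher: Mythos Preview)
Your route through the concordance complement $W$ differs from the paper's, which passes instead to the double cover of $[-T,T]\times S^3$ branched along $C$ and works with $\pi_1(\Sigma(K_-))$ rather than the knot group. Your computation that $\pi_1(W)\cong\Z$ is correct, and the complement approach can in fact be completed---but not by the dimension count you propose. The gap is twofold. First, the Kronheimer--Mrowka input (and \cite[Theorem 3.1]{CornwellNgSivek16}, which is actually a statement about $SO(3)$ representations of $\pi_1(\Sigma(K_-))$) guarantees only the \emph{existence} of a nonabelian representation, not a positive-dimensional family. Second, even granting such a family, the count is exactly balanced: $h$ new generators contribute $3h$ parameters in $\SU(2)$ while $h$ relations impose $3h$ conditions, so without a transversality argument there is no reason the solution set is nonempty. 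This is not a matter of care in execution; the dimension heuristic simply cannot close the argument.

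The correct tool, and the one the paper uses, is the Gerstenhaber--Rothaus theorem \cite{GerstenhaberRothaus}: if $G_0$ embeds in a compact connected Lie group and one adjoins $h$ generators and $h$ relations whose exponent-sum matrix in the new generators has nonzero determinant, then $G_0$ still embeds in the resulting group. In your framework one would take $G_0=\rho(\pi_1(\partial_-W))\subset\SU(2)$ for a single Kronheimer--Mrowka representation $\rho$; the determinant hypothesis holds because $H_1(W,\partial_-W)=0$ forces the handle boundary map $\partial_2\colon\Z^h\to\Z^h$ to be an isomorphism, and then $G_0$ would embed in a quotient of $\pi_1(W)\cong\Z$, contradicting nonabelianness. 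The paper runs the identical argument on the branched-cover side (as does \cite[Theorem 3.2]{CornwellNgSivek16} itself), with handle counts doubled to $2h$ and acyclicity coming from $H_*([-T,T]\times S^3,\{-T\}\times S^3)=0$; that route has the minor convenience that \cite[Theorem 3.1]{CornwellNgSivek16} is already phrased in terms of $\pi_1(\Sigma(K_-))$.
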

\begin{proof}
The proof is an adaptation of the proof of \cite[Theorem 3.2]{CornwellNgSivek16}, which showed that such a concordance $C$ cannot be ribbon, i.e.~the symplectisation coordinate must have a local maximum somewhere.

Consider the handle-decomposition of $[-T,T] \times S^3$ in which all handles are attached onto a neighbourhood $(\{-T\} \times S^3) \cup C$, and for which there are no $3$- or $4$-handles. We may also assume that there are no $0$-handles, and that the $2$-handles are attached after the $1$-handles. Since $H_*([-T,T]\times S^3,\{-T\} \times S^3)=0$, it follows that the number of $1$-handles is equal to the number of $2$-handles; denote this number by $h$.

Then we consider the double cover of $[-T,T] \times S^3$ branched over the concordance $C$. This branched cover thus has a handle decomposition for which there are $2h$ handles in the cover of index one as well as of index two. This branched double cover is a cobordism from $\Sigma(K_-)$ to $\Sigma(K_+)$, where $\Sigma(K_\pm)$ denotes the double cover of $S^3$ branched along the knot $K_\pm$. Note that $\Sigma(K_+)=S^3$ since $K_+$ is the unknot.

Attaching a four-handle to the end $\Sigma(K_+)=S^3$ we obtain a cobordism $X$ from $\Sigma(K_-)$ to $\emptyset$ which is simply connected. Indeed, turning it upside down, we have a cobordism with only $0$-, $2$-, and $3$-handles.

If we then consider $X$ as obtained by attaching $1$, $2$, and $4$-handles on $\Sigma(K_-)$, this means that the trivial group can obtained from $\pi_1(\Sigma(K_-))$ by adding $2h$ new generators (coming from the one-handles) and $2h$ new relations (coming from the two-handles).

The rest of the proof is the same as the argument in \cite[Theorem 3.2]{CornwellNgSivek16}. In particular, this argument uses the adaptation \cite[Theorem 3.1]{CornwellNgSivek16} of a result of Kronheimer and Mrowka that shows that that $\pi_1(\Sigma(K_-))$ admits a non-trivial representation $\rho$ in $SO(3)$ if and only if $K_-$ is not the unknot, while the argument in \cite[Section I.3]{Kirby}
shows that there can be no such representation under the assumption that $2h$ new generators and relations kill the group $\pi_1(\Sigma(K_-))$. 

We finish by recalling the latter argument. The group $G_0 \coloneqq \pi_1(\Sigma(K_-))/\ker \rho$ is a non-trivial group by the assumption that $\rho$ is non-trivial. This quotient group can be taken to have the same generators as $\pi_1(\Sigma(K_-))$ (in particular it is finitely generated). In addition, it admits a faithful non-trivial representation into $SO(3)$. Adjoining $2h$ new generators, i.e.~considering the free product $G_0 \star \F_{2h}$ of $G_0$ and the free group on $2h$ generators, together with the $2h$ relations as above, we obtain a group $G$. \color{black}This group is the quotient of the free product $\pi_1(\Sigma(K_-)) \star \F_{2h}$
by the normal subgroup generated by the $2h$ relations together with the relations from $\ker \rho$. 
\color{black}
In particular, $G$ is the quotient of the trivial group $\pi_1(X)$ and, thus, $G$ is itself trivial. 
Finally we can use \cite[Theorem 3]{GerstenhaberRothaus}
to show that $G_0$ embeds in $G$, which means that $G_0$ is trivial, and thus leading to the sought contradiction. (Note that the condition on the determinant of the $2h$ relations needed in the cited theorem \color{black} is \color{black} satisfied here by the argument above, by which  $H_1(\Sigma(K_-))$ is killed inside $H_1(X)$ by these $2h$ relations in the abelianisation.)

\end{proof}
\begin{remark}
It is unknown to the authors whether a regular Lagrangian cobordism is also decomposable. A decomposable Lagrangian cobordism $L \subset [-T,T] \times S^3$ is regular by a Weinstein handle decomposition of $[-T,T] \times S^3$ \color{black} of a very particular type\color{black}, in which the handle decomposition of the neighbourhood $T^*L$ and the handles attached to $\{-T\} \times S^3 \cup T^*L$ are in cancelling position. It is not clear if a general set of Weinstein handle attachment on $\{-T\} \times S^3 \cup T^*L$ that produces $[-T,T] \times S^3$ can be deformed into such a position.
\end{remark}

In recent work \cite{Dimitroglou:Approximations} by the first author it is shown that totally real concordances can be deformed \color{black} in order to become \color{black} Lagrangian after adding sufficiently many stabilisations to the Legendrian ends. \color{black} To formulate the result we need to recall the following definition. An almost complex structure on the symplectisation $(\R_t \times Y,d(e^t\,\alpha))$ of a contact manifold $(Y,\alpha)$ is called \textbf{cylindrical} if
\begin{itemize}
\item $J$ is invariant under translation of the $t$-coordinate;
  \item $J$ preserves the contact planes $\ker \alpha \subset TY$  in each level-set $\{t\}\times Y$ and is compatible with $d\alpha$ inside the same, i.e.~$d\alpha(\cdot,J\cdot)$ defines a Riemannian metric on $\ker \alpha$; and
  \item $J \partial_t=R_\alpha$ where $R_\alpha$ is the Reeb vector field.
  \end{itemize} \color{black}
\begin{thm}[\cite{Dimitroglou:Approximations}]
\label{thm:flexibility}
Assume that \color{black} for a given choice of cylindrical almost complex structure \color{black} on $(\R \times S^3,{\color{black} d(e^t\,\alpha_{st})})$ \color{black} there exists a totally real concordance from a Legendrian knot $\Lambda_-'$ to $\Lambda_+'$ (which according to Appendix \ref{appendix_totally_real} implies that $\Lambda_\pm'$ are smoothly concordant, and with the same ${\tt tb}$ and ${\tt rot}$). Then this concordance can be deformed to a Lagrangian concordance from $\Lambda_-$ to $\Lambda_+$ by a smooth isotopy, where both $\Lambda_\pm$ are obtained from $\Lambda_\pm'$ by performing a sufficiently large number $k\gg 0$ of stabilisations of both signs.
\end{thm}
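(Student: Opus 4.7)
The plan is to view the totally real concordance as a formal Lagrangian concordance and to apply a Lagrangian h-principle whose validity is ensured by the flexibility afforded by heavily stabilising the Legendrian ends. Tautologically, a totally real embedding of a cylinder into $(\R_t \times \R^3, d(e^t\alpha_0))$ with cylindrical Legendrian ends over $\Lambda_\pm'$ carries the formal data of a Lagrangian concordance: a smooth embedding together with a pointwise Lagrangian lift of its tangent bundle via any compatible almost complex structure. What is missing for the honest Lagrangian condition is the vanishing of $\iota^* d(e^t\alpha_0)$, and, additionally, for an \emph{exact} Lagrangian concordance, the vanishing of the action integral $\int_\gamma \iota^*(e^t\alpha_0)$ along an arc $\gamma$ joining the two ends of $C$.

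Next, I would use stabilisations of $\Lambda_\pm'$ to absorb the action obstruction. A positive (respectively negative) stabilisation introduces a cusp-edge that can be threaded by an arc so as to contribute action of arbitrarily small positive (respectively negative) sign. After sufficiently many stabilisations of both signs, one can reroute the generating arc of $C$ --- by a totally real isotopy supported near the ends --- to achieve any prescribed real value of $\int_\gamma \iota^*(e^t\alpha_0)$, in particular zero. At this point both the formal obstruction and the primitive obstruction to Lagrangianising $C$ are void.

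Finally, I would invoke an h-principle: combining the looseness-type flexibility of stabilised Legendrians under spin-like constructions \cite{DimitroglouRizellGolovko14} with a relative adaptation of Eliashberg--Murphy's flexibility for Lagrangian cobordisms \cite{LagCaps} to the concordance-with-stabilised-ends setting, one obtains the required $C^0$-small smooth isotopy from $C$ to an honest exact Lagrangian concordance that fixes the stabilised Legendrian ends. The hard part, I anticipate, is the preceding step: translating the global action adjustment into a local totally real isotopy realised entirely near the stabilising cusps, without introducing new concentrations of the pulled-back symplectic form elsewhere on $C$. This demands a careful local model describing how each added stabilisation modifies the action integral along $C$, and is where the technical heart of the argument must lie.
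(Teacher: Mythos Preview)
The paper does not contain its own proof of this theorem: it is imported verbatim from \cite{Dimitroglou:Approximations}, so there is no in-paper argument to compare against. I can therefore only assess your proposal on its own merits.

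Your outline has a genuine gap at the final step. The $h$-principles you cite do not apply in the setting of the theorem. Eliashberg--Murphy \cite{LagCaps} requires a \emph{loose} negative Legendrian end in the sense of Murphy, and loose Legendrians exist only in contact manifolds of dimension at least five; a stabilised Legendrian \emph{knot} in $\R^3$ is not loose in this sense, and there is no known analogue of the Lagrangian-cap $h$-principle for concordances of knots in the symplectisation of $\R^3$. The reference \cite{DimitroglouRizellGolovko14} concerns products and spuns, which raise the dimension; it does not provide flexibility for the original three-dimensional Legendrians. So the sentence ``invoke an $h$-principle'' is invoking a result that does not exist in the literature for this dimension, and the expected parametric version \cite{EliashbergLazarevMurphy} alluded to elsewhere in the paper is likewise high-dimensional.

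Your second step is also problematic. Stabilising a Legendrian is a modification of the contact-level boundary data; it is not clear how a stabilisation of $\Lambda_\pm'$ changes the action integral $\int_\gamma \iota^*(e^t\alpha_0)$ along a fixed arc in the concordance, nor why a ``totally real isotopy supported near the ends'' should realise an arbitrary action shift without creating compensating symplectic area elsewhere. You correctly flag this as the technical heart, but the proposal offers no mechanism. The actual argument in \cite{Dimitroglou:Approximations} is an approximation-type construction rather than an off-the-shelf $h$-principle, and the role of the stabilisations is substantially more delicate than action bookkeeping.
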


We will construct totally real concordances by applying the $h$-principle for totally real embeddings to Lagrangian concordances whose symplectisation coordinate has been reversed. Note that Lagrangian condition is not necessarily preserved after reversing the symplectisation coordinate, except at the points where the Lagrangian is \emph{cylindrical}. In order to show that the tangent spaces can be homotoped to totally real planes after reversing the concordance, the following result is crucial.
\begin{prop}
  \label{prop:regularhomotopy}
 Any Lagrangian concordance
  $$C\subset (\R_t \times \R^3_{xyz},d(e^t(dz-y\,dx)))$$
  from a Legendrian knot $\Lambda_-$ to a Legendrian knot $\Lambda_+$ admits a regular Lagrangian homotopy which is fixed at the negative end, and cylindrical at the positive end (but not necessarily embedded), to the trivial Lagrangian concordance $ \R \times \Lambda_-$.
\end{prop}
\begin{proof}
  Since the rotation numbers  of $\Lambda_+$ and $\Lambda_-$ agree, there is a Legendrian regular homotopy from $\Lambda_+$ to $\Lambda_-$ by the $h$-principle for Legendrian immersions \cite[24.1.3]{EliashbergMishachevCielebak}. Using a cylindrical extension of this Legendrian regular homotopy, one readily produces a regular (non-compactly supported) Lagrangian homotopy from $C$ to an immersed Lagrangian concordance $C'$ from $\Lambda_-$ to $\Lambda_-$ that is fixed at the negative end, and which is cylindrical at the positive end through the entire deformation. 
This construction can be done by generating the regular homotopy of the corresponding cylindrical  Lagrangian immersion by a multi-valued Hamiltonian on the symplectisation, which can be suitably cut-off so that it becomes supported on the positive cylindrical end. To that end, recall that Legendrian isotopies can be extended to a contact isotopy that is generated by a non-autonomous contact Hamiltonian $H \colon \R^3 \to \R$, and that the cylindrical lift of this contact isotopy to the symplectisation is generated by the Hamiltonian $e^tH \colon \R \times \R^3 \to \R$. Of course, since we consider Legendrian regular homotopies rather than isotopies, the Hamiltonians are necessarily multi-valued near the intersecting branches, but these multi-valued Hamiltonians still generate  a well-defined isotopy of each branch separately, i.e.~a regular Lagrangian homotopy.

  What remains is to show that $C'$ is compactly regular Lagrangian homotopic to the trivial cobordism $\R \times \Lambda_-$ by applying the $h$-principle for Lagrangian immersions \cite[24.3.1]{EliashbergMishachevCielebak}. However, to that end, it will be necessary to first rotate the positive end a number of times in order to correct for the differences of the Maslov classes of the positive and negative ends.  We refer the reader to \cite[Remark 4.5(1)]{CDRGGokova} for a similar construction. We proceed to outline the details.

Take a path $\gamma$ from the negative to the positive end of $C'$ that coincides with $\R \times \{\operatorname{pt}\}$ outside of a compact subset $([-T,T]\times \R^3)\cap C'$, for some choice of point $\operatorname{pt} \in \Lambda_-$.

Since there is a canonical identification of the tangent planes of $C'$ at the pair of points of $\gamma$ given by $(t', \operatorname{pt}), (-t', \operatorname{pt})\in \R\times \R^3$ , where $t'>T$, there is a well-defined Maslov class of Lagrangian tangent planes along the line. When this Maslov class is non-zero, we can readily apply a Hamiltonian isotopy of the symplectisation $\R \times \R^3$ that fixes the negative end, and is a lift under $\R^3_{xyz} \to \R^2_{xy}$ of a rotation at the positive end, so that the following is satisfied: The concordance $C'$ is deformed to a concordance $C''$ from $\Lambda_-$ to $\Lambda_-$, and the path $\gamma$ is deformed to a path $\tilde{\gamma}$, where the latter path coincides with $\R \times \{\operatorname{pt}\}$ outside of a compact subset, along which the Lagrangian tangent planes has a vanishing Maslov class.

Using the fact that the above path $\tilde{\gamma}$ has vanishing Maslov class, we can construct a compactly supported regular Lagrangian homotopy that normalises the path, so that the concordance becomes cylindrical near all of $\R \times \{\operatorname{pt}\} \subset \R \times \R^3$. Finally, since the second homotopy group of oriented Lagrangian planes vanishes, i.e.~$\pi_2(U(2))=0$, it follows that the entire immersed Lagrangian concordance is compactly supported regular Lagrangian homotopic to the trivial cylinder over $\Lambda_-$, as sought.
\end{proof}
\color{black}
\begin{prop}
\label{prop:reversible}

For any decomposable Lagrangian concordance
$$C\subset (\R_t \times \color{black}\R^3_{xyz},d(e^t(dz-y\,dx)))$$
from a Legendrian knot $\Lambda_-$ to a Legendrian knot $\Lambda_+$, the reversed cobordism
$$ C' \coloneqq \{(-t,x,y,z) \in \R_t \times \R^3_{xyz}: (t,x,y,z) \in C\}$$
admits a compactly supported smooth isotopy to a cobordism that is totally real, for any given choice of cylindrical almost complex structure.
\end{prop}
\begin{proof}
   
  Outside of a sufficiently large compact subset that contains the non-cylindrical part of $C$, the reversed cobordism $C'$ is already Lagrangian, and thus totally real for any compatible almost complex structure. We will analyse the homotopy class of the tangent planes of $C'$ inside $\R \times \R^3$, in order to show that the map is compactly supported homotopic to one which takes values in Lagrangian tangent planes; the latter are in particular totally real for any compatible almost complex structure. The existence of the smooth isotopy is then a consequence of the $h$-principle for totally real embeddings relative to the boundary \cite[27.4.1 and Example 2]{EliashbergMishachevCielebak}.

In order to show the existence of the homotopy of the Gauss map we argue as follows. First, by the existence of the regular Lagrangian homotopy produced by Proposition \ref{prop:regularhomotopy}, the Lagrangian Gauss map of $C$ admits a homotopy to a map that takes values in Lagrangian planes that are everywhere tangent to $\partial_t$, i.e.~that takes values in \emph{cylindrical} Lagrangian tangent planes. Note that, by construction, this regular homotopy is fixed on the negative cylindrical end of $C$, and cylindrical (but not fixed) at the positive cylindrical end of $C$. However, we want to find a compactly supported homotopy of the Gauss map to one that takes values in cylindrical Lagrangian tangent planes. In order to obtain this, we can simply perform an interpolating homotopy at the positive end of $C$ (here the entire original homotopy is through cylindrical Lagrangian tangent planes by construction), to make it fixed outside of a sufficiently large compact subset. Thus we have managed to create the sought compactly supported homotopy of the Lagrangian Gauss map.\color{black}

 Note that the smooth involution of the symplectisation $\R \times \R^3$ given by $(t,(x,y,z)) \mapsto (-t,(x,y,z))$  preserves those two-planes that are tangent to $\partial_t$, e.g.~the cylindrical Lagrangian tangent planes. Since $C$ has a Gauss map that is compactly supported homotopic to a map of cylindrical Lagrangian planes, we conclude that the tangent map of $C'$ is also compactly supported homotopic to a map of Lagrangian tangent planes. This shows that the $h$-principle for totally real embeddings can be applied to $C'$ as sought.
\end{proof}

\begin{remark}
Note that totally real concordances define a relation \color{black} on the set of Legendrian isotopy classes of Legendrian knots \color{black} which is obviously reflexive and transitive.  From Proposition \ref{prop:reversible} it follows that this relation is not anti-symmetric, and hence totally real concordances do not define a partial order.
\end{remark}

\begin{thm} 
\label{nonregular_concordance}
Let $\Lambda$ be a Legendrian knot that admits a decomposable Lagrangian disc filling. Then there exists an exact Lagrangian concordance $C \subset (\R_t \times \R^3,d(e^t\alpha_0))$ with concave Legendrian boundary $\Lambda_-$ obtained by $k$-fold positive and negative stabilisation of $\Lambda$ for $k \gg 0$ sufficiently large, and a convex Legendrian boundary $\Lambda_+$ which is the standard Legendrian unknot $U$ stabilised the same number of times. \end{thm}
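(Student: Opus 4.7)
The plan is to combine the given decomposable Lagrangian disc filling of $\Lambda$ with Proposition \ref{prop:reversible} and Theorem \ref{thm:flexibility} to produce the desired concordance, viewing it as a smooth-to-Lagrangian approximation of a totally real cylinder after suitable stabilisation.

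First, I would convert the given decomposable Lagrangian disc filling $D$ of $\Lambda$ into a decomposable Lagrangian concordance from the standard Legendrian unknot $U$ to $\Lambda$. Since $D$ is built up from the empty Legendrian by the elementary moves of Section~\ref{decompreg}, its initial step must be a Lagrangian $0$-handle, producing a standard $\tt{tb}=-1$ unknot $U$. Deleting this initial $0$-handle yields a decomposable Lagrangian cobordism $\tilde C$ from $U$ to $\Lambda$. Because $D$ is a connected surface of Euler characteristic $1$, the cobordism $\tilde C$ is connected of Euler characteristic $0$ with two boundary components, hence topologically a cylinder, so $\tilde C$ is a decomposable Lagrangian concordance. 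The existence of a Lagrangian filling also ensures $\tt{rot}(U)=\tt{rot}(\Lambda)=0$ by Chantraine's formulae, which are the hypotheses needed for the next step.

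Second, I would apply Proposition \ref{prop:reversible} to $\tilde C$: the reversed cobordism $\tilde C'$ from $\Lambda$ to $U$ admits a compactly supported smooth isotopy to a concordance $T$ that is totally real for any cylindrical almost complex structure. The main content here is the vanishing of the algebraic intersection number of the Gauss map of $\tilde C'$ with the non-totally-real locus $D\subset\mathbb{C}P^1\times\mathbb{C}P^1\cong Gr^+_2(\mathbb{R}^4)$, and this is guaranteed by the proposition using the fact that $0$-handles and $1$-handles in the decomposition of $\tilde C$ (and thus of $\tilde C'$) contribute intersections of opposite signs, and that since $\tilde C$ is a cylinder these handles come in matched pairs.

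Third, I would invoke Theorem \ref{thm:flexibility}, applied to the totally real concordance $T$ from $\Lambda$ to $U$. This result of \cite{Dimitroglou:Approximations} supplies an integer $k\gg 0$ and a compactly supported smooth isotopy that deforms $T$ to an exact Lagrangian concordance $C$ whose concave end $\Lambda_-$ is the $k$-fold positively and negatively stabilised $\Lambda$, and whose convex end $\Lambda_+$ is the standard Legendrian unknot $U$ stabilised the same number of times. This $C$ is the desired Lagrangian concordance.

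The main conceptual obstacle is the existence of the totally real model $T$ for the reversed decomposable concordance, which is already handled by Proposition \ref{prop:reversible} via the $h$-principle for totally real embeddings; the remaining steps amount to cutting off the initial $0$-handle and invoking Theorem \ref{thm:flexibility} as a black box. Note that the construction gives no direct control over the minimal $k$, but the theorem statement only requires $k$ to exist and be sufficiently large.
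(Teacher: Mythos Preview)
Your proof is correct and follows essentially the same approach as the paper: remove a $0$-handle from the decomposable disc filling to obtain a decomposable concordance from $U$ to $\Lambda$, apply Proposition~\ref{prop:reversible} to reverse it to a totally real concordance from $\Lambda$ to $U$, and then invoke Theorem~\ref{thm:flexibility}. You have in fact supplied slightly more detail than the paper (the Euler-characteristic argument that $\tilde C$ is a cylinder, and the verification that $\tt{rot}=0$ so that Proposition~\ref{prop:reversible} applies).
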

\begin{example}
Such Lagrangian concordances thus exists from sufficiently stabilised versions of $\Lambda_m$, e.g.~the $\overline{9_{46}}$-knot $\Lambda_3$. See Figures  \ref{fig:handle_decomposition_odd} and \ref{fig:handle_decomposition_even} for the construction of the Lagrangian slice discs. 
\end{example}

\begin{remark}
By Theorem \ref{thm:regularconc} the Lagrangian concordances produced by Corollary \ref{nonregular_concordance} are never regular. Therefore, from the result of Conway, Etnyre and Tosun \cite{ConwayEtnyreTosun21} \color{black}(or, alternatively, from our generalisation Proposition \ref{prop:surgeries})\color{black}, it follows that these Lagrangian concordances also are not decomposable in the sense of Chantraine \cite{Chantraine12}.
\end{remark}

\begin{proof}
A decomposable Lagrangian disc filling can be seen as a standard disc filling of a standard Legendrian unknot $U$ concatenated with a concordance from $U$ to $\Lambda$. We apply Proposition \ref{prop:reversible} in order to produce a totally real concordance from $\Lambda$ to $U$. Theorem \ref{thm:flexibility} then gives us the sought Lagrangian concordance, by which we first need to add sufficiently many positive and negative \color{black} stabilisations to $U$ and to $\Lambda$. \color{black}
\end{proof}

\begin{remark}
\label{spinconcordance}
Consider the spun  $\Sigma_{S^1}C$ of the non-regular concordance $C$ from Corollary \ref{nonregular_concordance}. Note that since by the construction of $C$ the positive and negative ends of $C$ are stabilized Legendrian knots, the positive and negative ends of $\Sigma_{S^1}C$  are loose Legendrian tori by \cite{Golovko14}. Then using the fact that the positive and negative ends of $\Sigma_{S^1}C$ have the same classical Legendrian invariants, they are Legendrian isotopic by the h-principle for loose Legendrian submanifolds, see \cite{Murphy}. From the expected parametric version of h-principle   for exact Lagrangian cobordisms with loose negative ends \cite{EliashbergLazarevMurphy} it should follow that $\Sigma_{S^1}C$ is Hamiltonian isotopic to the trivial cylinder. 
Finally, following \cite{Golovko14} we see that the same can be said about $\Sigma_{S^{k_1}}\dots \Sigma_{S^{k_r}}\Sigma_{S^1}C$ for $k_1,\dots,k_r\geq 1$.

\end{remark}

\begin{appendix}
\section{Classical obstructions to the existence of totally real cobordisms between Legendrian knots}
\label{appendix_totally_real}

Here we prove the direct generalization of the result of Chantraine \cite{Chantraine10} that provides the classical obstructions   to the existence of totally real cobordisms between Legendrian knots. 
\begin{thm}
\label{totallyreal}
If there is a totally real orientable cobordism $L$ from \color{black} null-homologous Legendrian knot $\Lambda_-$ with a Seifert surface $S$ \color{black} to Legendrian knot $\Lambda_+$, then 
\begin{align*}
&tb(\Lambda_+)-tb(\Lambda_-)={\color{black}-\chi(L)},\\
&r(\Lambda_-, [S])=r(\Lambda_+, [S\cup L]).
\end{align*}
\end{thm}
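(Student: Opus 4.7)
The plan is to carry out Chantraine's argument from \cite{Chantraine10} for Lagrangian cobordisms, replacing ``Lagrangian'' by ``totally real'' throughout. The Lagrangian hypothesis enters Chantraine's proof only through the following consequence of being totally real: for any totally real submanifold $L^n \subset M^{2n}$ and any compatible almost complex structure $J$, multiplication by $J$ yields an isomorphism $J\colon TL\xrightarrow{\sim}\nu L$ of real vector bundles, and equivalently an isomorphism $TL\otimes_{\R}\C\xrightarrow{\sim}TM|_L$ of complex vector bundles. No further symplectic information about $TL$ is required, so the same proof should go through verbatim.

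For the $tb$-formula, I would work in $(\R\times Y,d(e^t\alpha))$ with a cylindrical compatible $J$ satisfying $J\partial_t=R$ (the Reeb vector field) and preserving $\xi$. Choose a vector field $X$ on $L$ that agrees with $\partial_t$ in the cylindrical ends and whose only interior singularities are isolated zeroes of total Poincar\'e--Hopf index $\chi(L)$. By the totally real condition $JX$ is a genuine section of $\nu L$; the pushoff $L'$ of $L$ along $JX$ then coincides with the Reeb pushoff of $\Lambda_\pm$ in the ends. The signed intersection $L\cdot L'$ equals $\chi(L)$ by Poincar\'e--Hopf, and on the other hand equals $tb(\Lambda_-)-tb(\Lambda_+)$ by the standard relative Euler class calculation of $\nu L$ (the orientation flip at the concave end being the usual convention), yielding $tb(\Lambda_+)-tb(\Lambda_-)=-\chi(L)$.

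For the rotation identity, I would start from a trivialisation $v$ of the complex line bundle $\xi|_S$, which exists since $\xi$ on $\R^3$ is globally trivialisable, so that $r(\Lambda_-,[S])$ is the winding of $T\Lambda_-$ against $v|_{\Lambda_-}$ inside $\xi|_{\Lambda_-}$. The complex-bundle isomorphism $TL\otimes_\R\C\cong T(\R\times Y)|_L$ coming from the totally real property, together with the splitting $T(\R\times Y)=\R\langle\partial_t,R\rangle\oplus\xi$, allows us to compare sections of the complex line bundle $\xi|_L$ with the oriented real structure on $TL$. Tracking the relative first Chern class of $T(\R\times Y)|_{S\cup L}$ between the boundary trivialisations induced by $T\Lambda_\pm$ shows that this class vanishes, so any extension of $v$ across $L$ with controlled indices at its zeroes produces a trivialisation of $\xi|_{S\cup L}$ whose winding along $\Lambda_+$ computes both $r(\Lambda_+,[S\cup L])$ and $r(\Lambda_-,[S])$.

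The main expected obstacle is the careful bookkeeping of signs and of the boundary framings in the relative Euler and Chern class computations, particularly ensuring that the trivialisations of $\xi$ over $S$ and over $L$ glue correctly to one over $S\cup L$. Since $\xi$ on $\R^3$ has vanishing first Chern class, no global cohomological obstruction arises, and these compatibility checks reduce to local verifications that parallel those in \cite{Chantraine10}.
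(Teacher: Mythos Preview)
Your proposal is correct and follows essentially the same approach as the paper. The paper's proof is even terser than yours: it simply perturbs the symplectisation coordinate to a Morse function $h$, pushes off by $J(\nabla h)$ for a cylindrical compatible $J$ for which $L$ is totally real, observes that the signed intersection count is $-\chi(L)$ exactly as in Chantraine's Lagrangian case, and then says ``the rest follows in the same manner'' for the rotation number. Your vector field $X$ agreeing with $\partial_t$ on the ends is precisely $\nabla h$ for such a perturbation, so the two arguments are the same.
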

\begin{proof}
The proof is completely analogous to the case when the cobordism is Lagrangian. Namely, we perturb the symplectisation coordinate in a compact subset so that it restricts to a Morse function $h\colon \color{black}L\color{black} \to \R$ on the cobordism. Then we consider a push-off by $J(\nabla h)$ for an almost complex structure that is cylindrical outside of a compact subset, compatible with the symplectic structure, and for which the cobordism is totally-real. We can assume that $\nabla h=\partial_t$ is satisfied outside of a compact subset. It follows that the signed count of intersection points are precisely $-\chi(\color{black}L)$ (as in the Lagrangian case). The rest follows in the same manner. (Recall that the Thurston-Bennequin invariant is the self-intersection number computed by the push-off of a Legendrian along the vector field $J\partial_t$.)
\end{proof}
\end{appendix}

\end{document}